\documentclass[11pt]{article}
\usepackage[utf8]{inputenc}
\usepackage[T1]{fontenc}
\usepackage{lmodern}
\usepackage{amsmath,amssymb,amsthm,mathtools,mathrsfs}
\usepackage{geometry}
\usepackage{hyperref}
\usepackage{enumitem}
\usepackage{microtype}
\usepackage{xurl}
\usepackage{booktabs}
\usepackage{float}
\usepackage{tikz}
\usepackage{pgfplots}
\usetikzlibrary{arrows.meta,positioning,calc}
\pgfplotsset{compat=1.18}
\hypersetup{colorlinks=true,linkcolor=blue,citecolor=blue,urlcolor=blue}

\newtheorem{theorem}{Theorem}[section]
\newtheorem{lemma}[theorem]{Lemma}
\newtheorem{proposition}[theorem]{Proposition}
\newtheorem{corollary}[theorem]{Corollary}
\theoremstyle{definition}
\newtheorem{definition}[theorem]{Definition}
\newtheorem{remark}[theorem]{Remark}

\newcommand{\Q}{\mathbb{Q}}
\newcommand{\Z}{\mathbb{Z}}

\newcommand{\Ftwoone}{{}_2F_1}
\newcommand{\FthreeTwo}{{}_3F_2}
\newcommand{\PCF}{\operatorname{PCF}}
\newcommand{\Sym}{\operatorname{Sym}}
\newcommand{\GL}{\operatorname{GL}}

\newcommand{\dd}{\mathrm{d}}

\newcommand{\sig}{\sigma}

\title{Order-3 $\pi$-formulas, Ap\'ery-like kernels, and Clausen functoriality for Conservative Matrix Fields}
\author{Alex Shvets}
\date{}

\begin{document}
\maketitle

\begin{abstract}
Raz, Shalyt, Leibtag, Kalisch, Weinbaum, Hadad, and Kaminer recently showed that formulas for $\pi$ can be organized by canonical polynomial recurrences and partially unified by a rank-$2$ Conservative Matrix Field (CMF). We prove that each order-$3$ recurrence explicitly printed in the public Appendix~B.6 of their paper is a shifted summation lift of an explicit order-$2$ kernel, and identify all three kernels: the two $\pi$-kernels are explicit rescalings of the sporadic Ap\'ery-like sequences $A036917$ and $A002895$ (Domb numbers, case~$(\alpha)$), while the Catalan kernel is a hypergeometric twist of the Gauss-square coefficient sequence at $(a,b,c)=(\tfrac12,1,\tfrac32)$. We place these kernels in a unified $\Sym^2$ framework: the first $\pi$-kernel and the Catalan kernel come directly from Gauss-square coefficient sequences, while the Domb kernel is recovered by recasting the classical degree-$3$ Belyi pullback $\phi(x)=108x^2/(1-4x)^3$ and the associated algebraic twist in CMF language. We write an explicit square-gauge matrix for the Gauss CMF, formulate the standard pullback--twist transport in CMF terms, and show that for rank-$2$ objects it is compatible with $\Sym^2$. We further prove an inverse classification: for a fixed $\Sym^2$-type Riemann scheme, the one-parameter family of Fuchsian operators contains a unique $\Sym^2(\mathrm{Gauss})$ point, cut out by the closed-form condition $\lambda_0=2\gamma_1\gamma_2(1-2\alpha)$ on the accessory parameter. Finally, a Belyi-pullback scan over $5040$ configurations produces $11$ additional integer sequences of the form $[x^n]\lambda^n\,{}_2F_1(a,b;c;\phi(x))^2$; we prove their integrality and place them in the same $\Sym^2$-pullback framework.
\end{abstract}

\section{Introduction}
The recent preprint of Raz, Shalyt, Leibtag, Kalisch, Weinbaum, Hadad, and Kaminer develops an automated pipeline for harvesting, validating, canonicalizing, and partially unifying formulas for mathematical constants, with the flagship case study being $\pi$ \cite{Raz}. Applied to $385$ validated formulas for $\pi$, their procedure produces $153$ canonical polynomial recurrences: $149$ of order $2$ and $4$ of order $3$ \cite[\S3.3]{Raz}. The same work exhibits a concrete rank-$2$ three-dimensional Conservative Matrix Field (CMF) for $\pi$ and proves that $81$ of the $153$ canonical forms reside in that CMF \cite[Table~4]{Raz}.

The appearance of order-$3$ canonical forms raises a natural structural question. Are these order-$3$ objects genuinely outside rank-$2$ CMF geometry, or can some of them be decomposed into lower-order kernels together with external summation? The public Appendix~B.6 of \cite{Raz} prints two explicit order-$3$ recurrences for $\pi$ and one for Catalan's constant. We apply a standard Ore-algebraic factorization criterion---right divisibility by $(S-1)$, equivalent to the vanishing of the coefficient sum---to these three printed recurrences and show that each is a shifted summation lift of an order-$2$ kernel.

Our second result identifies all three printed kernels. The first $\pi$-kernel is an explicit rescaling of the classical Ap\'ery-like sequence $A036917$ \cite{Delaygue,OEISA036917}; the second $\pi$-kernel is an explicit rescaling of the Domb numbers $A002895$ (case~$(\alpha)$) \cite{OEISA002895,Gorodetsky}; and the Catalan kernel is a hypergeometric twist of the coefficient sequence of ${}_2F_1(\tfrac12,1;\tfrac32;z)^2$. The connections between $A036917$, $A002895$ and series for $1/\pi$ are well established \cite{ChanVerrill,Cooper}; what is new is their identification as the specific summation kernels of the printed order-$3$ recurrences of \cite{Raz}. In particular, the two $\pi$-kernels are sporadic Ap\'ery-like sequences while the Catalan kernel is a non-sporadic Gauss-square point. The first $\pi$-kernel and the Catalan kernel are explicit rescalings of ${}_2F_1^2$ coefficient sequences; the second $\pi$-kernel (Domb) is related to the ${}_2F_1$-squared world only through an algebraic pullback.

Our third result is an explicit square-gauge formula for the Gauss hypergeometric CMF. Let
\[
f(a,b,c;z):={}_2F_1(a,b;c;z),\qquad g(a,b,c;z):=f(a,b,c;z)^2.
\]
We compute a rational matrix $\Phi(a,b,c;z)$ such that the CMF generated by $g$ in the basis $(g,\theta g,\theta^2 g)$ is gauge-equivalent to the square basis $(f^2,f\theta f,(\theta f)^2)$. In the pure Clausen regime this specializes to an explicit gauge relation between a Gauss $\Ftwoone$-CMF and a hypergeometric $\FthreeTwo$-CMF.

Our fourth result connects the ambient $\pi$-CMF to the classical theory. The third-order ODE for $g(z)={}_2F_1(a,b;c;z)^2$ was first derived by Chaundy \cite{Chaundy} and is explicit in Vid\=unas \cite[Eq.~(31)]{Vidunas}; the corresponding coefficient recurrence for general parameters has recently been obtained by Mao and Tian \cite{MaoTian}. We show that, when this ODE is interpreted as the differential component $M_\theta$ of the square Gauss CMF, it recovers the first printed $\pi$-kernel and, after a direct Gauss-square specialization, the printed Catalan kernel as well.

Our fifth result closes the Domb branch. We show that the Domb generating function equals $[1/(1-4x)]\cdot{}_2F_1(\tfrac16,\tfrac13;1;\phi(x))^2$ where $\phi(x)=108x^2/(1-4x)^3$ is a degree-$3$ Belyi map ramified only over $\{0,1,\infty\}$. We then prove a general pullback--twist functoriality theorem: composing a CMF basis with a rational pullback and a scalar twist produces a new CMF whose shift and differential generators are given by explicit formulas, and $\Sym^2$ commutes with this operation. This places all three printed kernels inside a single mechanism.

\begin{figure}[t]
\centering
\begin{tikzpicture}[scale=0.8, transform shape,
    node distance=10mm and 9mm,
    >=Latex,
    box/.style={draw, rounded corners=2pt, align=center, minimum width=26mm, minimum height=8mm, inner sep=2.5pt, fill=gray!4},
    kernel/.style={draw, rounded corners=2pt, align=center, minimum width=31mm, minimum height=8mm, inner sep=2.5pt, fill=blue!4},
    formula/.style={draw, rounded corners=2pt, align=center, minimum width=34mm, minimum height=8mm, inner sep=2.5pt, fill=green!4},
    openbox/.style={draw, rounded corners=2pt, dashed, align=center, minimum width=34mm, minimum height=8mm, inner sep=2.5pt, fill=orange!4},
    lab/.style={font=\footnotesize, inner sep=1pt}
]
\node[box] (gauss) {ambient\\ ${}_2F_1$-CMF};
\node[box, right=of gauss] (sym) {$\Sym^2$};
\node[box, right=of sym] (mth) {differential\\ component $M_\theta$};

\node[kernel, above right=11mm and 15mm of mth] (a036) {$A036917$ kernel\\ (first $\pi$)};
\node[kernel, below right=0mm and 15mm of mth] (catk) {Catalan kernel\\ (Gauss-square)};
\node[kernel, below right=11mm and 15mm of mth] (domb) {Domb kernel\\ $A002895$};

\node[formula, right=20mm of a036] (piform) {printed order-$3$\\ $\pi$ formula};
\node[formula, right=20mm of catk] (catform) {printed order-$3$\\ Catalan formula};
\node[formula, right=20mm of domb] (dombform) {printed order-$3$\\ $\pi$ formula};

\draw[->, thick] (gauss) -- node[lab, above] {} (sym);
\draw[->, thick] (sym) -- node[lab, above] {} (mth);

\draw[->, thick] (mth) -- node[lab, above] {coefficients} (a036);
\draw[->, thick] (mth) -- node[lab, above] {specialize} (catk);
\draw[->, thick] (sym.south east) .. controls +(14mm,-12mm) and +(-18mm,12mm) .. node[lab, below, pos=0.5] {Belyi $\phi$} (domb.west);

\draw[->, thick] (a036) -- node[lab, above] {summation} (piform);
\draw[->, thick] (catk) -- node[lab, above] {summation} (catform);
\draw[->, thick] (domb) -- node[lab, above] {summation} (dombform);
\end{tikzpicture}
\caption{Structural map of the three printed order-$3$ examples. The first $\pi$-kernel and the Catalan kernel arise from Gauss-square specialization after rescaling (solid arrows via $M_\theta$). The Domb kernel arises from ${}_2F_1(\tfrac16,\tfrac13;1;z)$ via the Belyi pullback $\phi(x)=108x^2/(1-4x)^3$ and twist $1/(1-4x)$ (solid arrow via pullback).}
\label{fig:chain}
\end{figure}
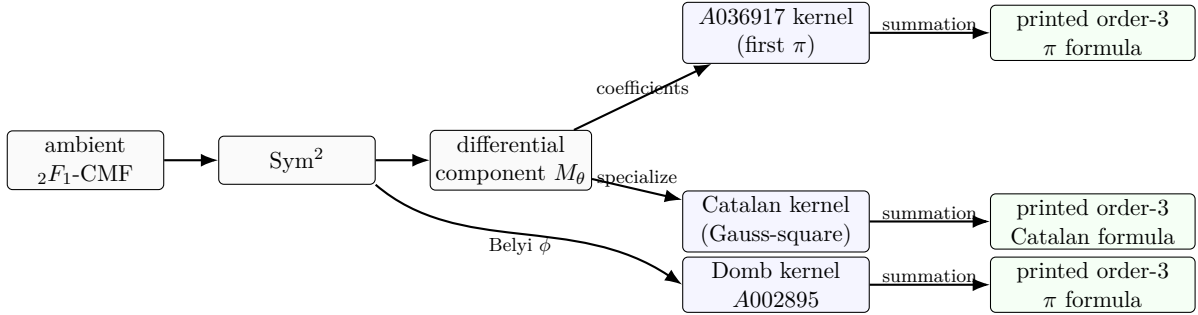

We stress three limitations. First, Raz et al.\ canonicalize formulas at the level of \emph{partial sums or convergents}; our order-$2$ kernels live at the level of summands or coefficient sequences. Thus our results refine, rather than contradict, the formula-level minimal orders reported in \cite{Raz}. Second, Raz et al.\ report four order-$3$ canonical forms for $\pi$, but only two explicit order-$3$ $\pi$-recurrences are printed in the public Appendix~B.6; the fourth order-$3$ $\pi$-canonical form is not explicit in the public materials used here. Third, our intrinsic derivation of $A036917$ uses the differential component $M_\theta$ of the ambient Gauss CMF. It remains open whether the same kernel appears as a \emph{trajectory} of the current rank-$2$ $\pi$-CMF in the sense of parameter shifts.

The paper is organized as follows. Section~\ref{sec:preliminaries} recalls the minimal amount of CMF and canonical-form formalism needed below. Section~\ref{sec:summation} proves the factorization criterion and applies it to the three printed order-$3$ recurrences from Appendix~B.6 of \cite{Raz}. Section~\ref{sec:aperylike-clausen} identifies all three kernels, proves the general square-gauge formula for Gauss CMFs, derives the intrinsic differential equation and coefficient recurrence of the square, records the pure Clausen corollary, establishes an inverse classification via the accessory parameter, and closes the Domb branch via a Belyi pullback and a pullback--twist functoriality theorem for CMFs. Section~\ref{sec:belyi-scan} reports the results of a Belyi-pullback scan over $5040$ parameter configurations, proves integrality for $11$ additional integer sequences, and places them in the $\Sym^2$-pullback framework. Section~\ref{sec:discussion} lists consequences and open problems.

\section{Preliminaries}
\label{sec:preliminaries}
\subsection{Conservative Matrix Fields and trajectories}
We use the modern CMF formalism of \cite{David,Weinbaum}. A CMF is a $1$-cocycle of $\Z^d$ with values in matrices of rational functions.

\begin{definition}
Let $K$ be a field and let $\sig_v$ denote the shift action of $v\in\Z^d$ on rational functions in $d$ variables. A \emph{Conservative Matrix Field} of dimension $d$ and rank $r$ over $K$ is a map
\[
\mathcal M\colon \Z^d\to \GL_r(K(\mathbf x)),\qquad v\mapsto \mathcal M_v,
\]
such that
\begin{equation}\label{eq:cocycle-intro}
\mathcal M_{v+w}=\mathcal M_v\,\sig_v(\mathcal M_w)\qquad (v,w\in\Z^d).
\end{equation}
\end{definition}

Following \cite{Weinbaum}, if $f$ is a D-finite function and $B$ is a basis of the finite-dimensional Ore-module generated by $f$, then the basis-change matrices relating $B$ and its parameter shifts form a CMF \cite[Theorem~3.1]{Weinbaum}. Moreover, changing the basis changes the CMF by a rational coboundary transformation \cite[Proposition~3.2]{Weinbaum}.

Given a basepoint $x$ and a direction $v\in\Z^d$, one obtains a one-dimensional object by sampling along the trajectory $x+n v$. The corresponding \emph{trajectory matrix} is
\[
T_{x,v}(n):=\mathcal M_v(x+n v).
\]
When a rational gauge transform converts $T_{x,v}(n)$ to companion form, the trajectory yields a linear recurrence in the step variable $n$. This is the bridge between CMFs and canonical polynomial recurrences used in \cite{Raz,Weinbaum}.

\subsection{Canonical forms and coboundary equivalence}
Raz et al.
associate to a formula for a constant its \emph{canonical form}: the minimal linear recurrence with polynomial coefficients satisfied by the corresponding sequence of rational approximants \cite[\S3.3 and Appendix~C.4]{Raz}. In the second-order case this is equivalently represented as a polynomial continued fraction (PCF) \cite[\S2.1 and Appendix~E]{Raz}. Throughout the paper we work in the Ore algebra
\[
\Q[n]\langle S\rangle,
\]
where $S$ is the forward shift and satisfies
\[
S\,p(n)=p(n+1)\,S \qquad (p\in\Q[n]).
\]
We also write $\sigma$ for the induced automorphism $\sigma(p)(n)=p(n+1)$.

Two polynomial matrix recurrences $A(n)$ and $B(n)$ are \emph{coboundary equivalent} if there exist a polynomial matrix $U(n)$ and nonzero polynomials $p_A(n),p_B(n)$ such that
\begin{equation}\label{eq:coboundary-poly}
p_A(n)A(n)U(n+1)=p_B(n)U(n)B(n).
\end{equation}
This is the notion used in the UMAPS algorithm of \cite[\S3.5 and Appendix~C.3]{Raz}.

\subsection{Symmetric square of a rank-$2$ CMF}
If a $2\times 2$ matrix acts on a row basis $(u,v)$ by right multiplication,
\[
[u,v]M=[u',v'],\qquad M=\begin{pmatrix}a&b\\ c&d\end{pmatrix},
\]
then the induced action on the symmetric-square basis $(u^2,uv,v^2)$ is given by
\begin{equation}\label{eq:sym2formula}
\Sym^2(M)=
\begin{pmatrix}
a^2 & ab & b^2\\
2ac & ad+bc & 2bd\\
c^2 & cd & d^2
\end{pmatrix}.
\end{equation}
This is the standard symmetric-square functor on $\GL_2$.

\subsection{Ap\'ery-like sequences}
We use the terminology of Ap\'ery-like sequences as in the work of Almkvist--Zudilin and Delaygue \cite{Delaygue}. The sequence
\[
A_n=\sum_{k=0}^n \binom{2k}{k}^2\binom{2n-2k}{n-k}^2
\]
is OEIS~A036917. It is one of the six sporadic Almkvist--Zudilin sequences (see \cite{Delaygue} and \cite{OEISA036917}); its generating function is the square of a Zagier generating function \cite{Gorodetsky}. We shall use this identification in Section~\ref{sec:aperylike-clausen}.

\section{Summation decomposition of printed order-$3$ recurrences}\label{sec:summation}
We begin with a standard algebraic criterion. The lemma below is a direct consequence of right Euclidean division in the Ore algebra $\Q[n]\langle S\rangle$; we include a short proof for the reader's convenience.

\begin{lemma}[summation-lift criterion]\label{lem:factor-criterion}
Let
\[
L_3=A_0(n)+A_1(n)S+A_2(n)S^2+A_3(n)S^3\in\Q[n]\langle S\rangle.
\]
The following are equivalent:
\begin{enumerate}[label=\textup{(\roman*)}]
\item there exist a nonzero constant $c\in\Q$ and an order-$2$ operator
\[
L_2=B_0(n)+B_1(n)S+B_2(n)S^2
\]
such that
\begin{equation}\label{eq:summation-factor}
L_3=c\,\sigma\!\bigl(L_2(S-1)\bigr);
\end{equation}
\item the coefficient identity
\begin{equation}\label{eq:sumzero}
A_0(n)+A_1(n)+A_2(n)+A_3(n)=0
\end{equation}
holds identically.
\end{enumerate}
Whenever \eqref{eq:sumzero} holds, the coefficients of $L_2$ are uniquely recovered from
\begin{equation}\label{eq:BfromA}
B_0(n)=-\frac{A_0(n-1)}{c},\qquad
B_1(n)=\frac{A_2(n-1)+A_3(n-1)}{c},\qquad
B_2(n)=\frac{A_3(n-1)}{c}.
\end{equation}
\end{lemma}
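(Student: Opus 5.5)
The plan is to expand the right-hand side of \eqref{eq:summation-factor} explicitly in the basis $1,S,S^2,S^3$ of $\Q[n]\langle S\rangle$ and compare coefficients; both directions and the recovery formulas \eqref{eq:BfromA} then fall out of one small linear system. The first step is to compute $L_2(S-1)$. Because $S-1$ has constant coefficients, the commutation rule $S\,p(n)=p(n+1)S$ never fires when multiplying $L_2$ on the right by $S-1$. One gets
\[
L_2(S-1)=-B_0+(B_0-B_1)S+(B_1-B_2)S^2+B_2S^3 .
\]
Applying $\sigma$ coefficientwise (which is how I read $\sigma$ on an operator, i.e.\ $n\mapsto n+1$ in each coefficient) and multiplying by $c$ gives the four coefficient equations
\[
A_0(n)=-cB_0(n+1),\quad A_1(n)=c\bigl(B_0-B_1\bigr)(n+1),\quad A_2(n)=c\bigl(B_1-B_2\bigr)(n+1),\quad A_3(n)=cB_2(n+1).
\]

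For (i)$\Rightarrow$(ii), I would simply add the four equations. The right-hand sides telescope to zero, which is \eqref{eq:sumzero}. Equivalently, $L_3$ annihilates constant sequences, because $(S-1)$ does.

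For (ii)$\Rightarrow$(i), I would fix any nonzero $c$ (for instance $c=1$) and define $B_0,B_1,B_2$ by \eqref{eq:BfromA}. These are polynomials, since shifting $n\mapsto n-1$ preserves $\Q[n]$. The equations for $A_0$ and $A_3$ then hold by construction. The equation for $A_2$ holds because $c(B_1-B_2)(n+1)=(A_2+A_3)(n)-A_3(n)$. The only remaining check is the $A_1$ equation:
\[
c(B_0-B_1)(n+1)=-A_0(n)-A_2(n)-A_3(n),
\]
and this equals $A_1(n)$ precisely by \eqref{eq:sumzero}. This is the one point where the hypothesis is used.

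Uniqueness comes from solving the system top-down for a given $c$. The $A_0$ and $A_3$ equations force $B_0$ and $B_2$, and then the $A_2$ equation forces $B_1$. Since $\sigma$ is an automorphism, these values are exactly \eqref{eq:BfromA}. The overdetermined fourth equation is the compatibility condition \eqref{eq:sumzero}.

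The only real obstacle is bookkeeping rather than mathematics, and it has two parts. The first is to fix the convention that $\sigma$ acts on coefficients of an operator, and to note that right multiplication by the constant-coefficient factor $S-1$ introduces no shifts. The second is to remark that $L_2$ is genuinely of order $2$ exactly when $B_2\neq0$, i.e.\ when $A_3\neq0$, which holds since $L_3$ has order $3$. I would also remark that $c$ is a free normalization: uniqueness of $L_2$ is meant for the chosen $c$, and changing $c$ rescales $L_2$ by $1/c$.
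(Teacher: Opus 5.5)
Your proposal is correct and matches the paper's proof essentially step for step: expand $L_2(S-1)$ coefficientwise, apply $\sigma$, read off the four-equation system, telescope for (i)$\Rightarrow$(ii), check that \eqref{eq:BfromA} solves the system (with \eqref{eq:sumzero} needed only for the $S$-coefficient), and get uniqueness from the triangular structure. Your side remarks on how $\sigma$ acts on an operator, on $B_2\neq0$ exactly when $A_3\neq0$, and on $c$ being a free normalization are sound clarifications that the paper leaves implicit.
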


\begin{proof}
We expand
\[
L_2(S-1)=B_0(S-1)+B_1S(S-1)+B_2S^2(S-1)
\]
in the Ore algebra $\Q[n]\langle S\rangle$:
\[
L_2(S-1)=-B_0+(B_0-B_1)S+(B_1-B_2)S^2+B_2S^3.
\]
Applying $\sigma$ gives
\[
\sigma\!\bigl(L_2(S-1)\bigr)
=-\sigma(B_0)+\bigl(\sigma(B_0)-\sigma(B_1)\bigr)S+\bigl(\sigma(B_1)-\sigma(B_2)\bigr)S^2+\sigma(B_2)S^3.
\]
Therefore \eqref{eq:summation-factor} is equivalent to the system
\[
A_0=-c\,\sigma(B_0),\qquad A_1=c\bigl(\sigma(B_0)-\sigma(B_1)\bigr),
\]
\[
A_2=c\bigl(\sigma(B_1)-\sigma(B_2)\bigr),\qquad A_3=c\,\sigma(B_2).
\]
Adding the four equalities yields \eqref{eq:sumzero}. Conversely, if \eqref{eq:sumzero} holds, then the formulas \eqref{eq:BfromA} solve the above system, proving the existence of $L_2$. Uniqueness follows because the system is triangular in $\sigma(B_0),\sigma(B_1),\sigma(B_2)$.
\end{proof}

\begin{remark}\label{rem:summation-lift}
If a sequence $(c_n)$ is annihilated by $L_2$ and $s_n:=\sum_{k=0}^n c_k$, then $(S-1)s_n=c_{n+1}$. Hence any factorization of the form \eqref{eq:summation-factor} says precisely that the order-$3$ recurrence $L_3$ is obtained from the order-$2$ kernel $L_2$ by one summation.
\end{remark}

We now apply Lemma~\ref{lem:factor-criterion} to the three order-$3$ recurrences explicitly printed in Appendix~B.6 of \cite[p.~21]{Raz}: two for $\pi$ and one shared by two formulas for Catalan's constant.

\begin{theorem}[factorization of the printed order-$3$ recurrences]\label{thm:three-printed-factor}
The three order-$3$ recurrences printed in Appendix~B.6 of \cite[p.~21]{Raz} satisfy the criterion \eqref{eq:sumzero} and therefore factor as shifted summation lifts of order-$2$ kernels.

More precisely:
\begin{enumerate}[label=\textup{(\roman*)}]
\item The first printed $\pi$-recurrence factors as
\[
L^{(\pi,1)}_3=\frac14\,\sigma\!\bigl(L^{(1)}_2(S-1)\bigr),
\]
with
\begin{equation}\label{eq:L2firstkernel}
L^{(1)}_2=(n+1)^4-n(2n+3)(2n^2+6n+5)S+4n(n+1)(n+2)^2S^2.
\end{equation}
For \(n\ge 1\), define \(u_n=4^n(n-1)!(n!)^2 c_n\).
Then the inflated recurrence is
\[
u_{n+1}=(2n+1)(2n^2+2n+1)u_n-4n^6u_{n-1}\qquad (n\ge 2),
\]
equivalently the inflated kernel is represented by the polynomial continued fraction
\begin{equation}\label{eq:firstkernelPCF}
\PCF\bigl((2n+1)(2n^2+2n+1),-4n^6\bigr).
\end{equation}

\item The second printed $\pi$-recurrence factors as
\[
L^{(\pi,2)}_3=\frac1{144}\,\sigma\!\bigl(L^{(2)}_2(S-1)\bigr),
\]
with
\begin{equation}\label{eq:L2secondkernel}
\begin{aligned}
L^{(2)}_2={}&(n+1)^3(3n+4)(3n+7) \\
&+(2n+3)(3n+1)(3n+7)(5n^2+15n+12)S \\
&+16(n+2)^3(3n+1)(3n+4)S^2.
\end{aligned}
\end{equation}

\item The printed order-$3$ Catalan recurrence factors as
\[
L^{(G)}_3=\frac14\,\sigma\!\bigl(L^{(G)}_2(S-1)\bigr),
\]
with
\begin{equation}\label{eq:L2catalan}
L^{(G)}_2=(n+1)(n+2)-4(n+2)^2S+(2n+5)^2S^2.
\end{equation}
Equivalently, in recurrence form,
\begin{equation}\label{eq:catalankernelrec}
(2n+5)^2 g_{n+2}-4(n+2)^2 g_{n+1}+(n+1)(n+2)g_n=0.
\end{equation}
\end{enumerate}
\end{theorem}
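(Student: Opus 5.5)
The proof is a direct computation driven by Lemma~\ref{lem:factor-criterion}. The printed recurrences of \cite[Appendix~B.6]{Raz} are not reproduced in the statement, so the plan goes the other way. For each of the three kernels $L^{(1)}_2$, $L^{(2)}_2$, $L^{(G)}_2$ and the stated constant $c\in\{\tfrac14,\tfrac1{144},\tfrac14\}$, I will expand $c\,\sigma\bigl(L_2(S-1)\bigr)$ using the formula from the proof of the lemma:
\[
c\,\sigma\bigl(L_2(S-1)\bigr)=-c\,\sigma(B_0)+c\bigl(\sigma(B_0)-\sigma(B_1)\bigr)S+c\bigl(\sigma(B_1)-\sigma(B_2)\bigr)S^2+c\,\sigma(B_2)S^3 .
\]
I will then compare the four resulting polynomial coefficients with the $A_0,\dots,A_3$ printed in \cite{Raz}. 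Equivalently, I can start from the printed $A_i$, check the coefficient identity \eqref{eq:sumzero}, and recover $B_0,B_1,B_2$ from \eqref{eq:BfromA}. By the uniqueness part of the lemma, the two directions agree.

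\textbf{Scaling and normalization.} The only choice is the constant $c$. I will fix it by matching the leading coefficient $A_3(n)=c\,B_2(n+1)$. For instance, in case (i), $\sigma(B_2)=4(n+1)(n+2)(n+3)^2$, so $c=\tfrac14$ gives $A_3=(n+1)(n+2)(n+3)^2$. The printed recurrence should have this form up to the normalization used in \cite{Raz}, which is integer-primitive coefficients. After fixing $c$, checking \eqref{eq:sumzero} and the remaining coefficients is mechanical polynomial arithmetic. This is degree $4$ for (i), degree $5$ for (ii) and degree $2$ for (iii), and I would verify it symbolically. The Catalan recurrence \eqref{eq:catalankernelrec} is then just $L^{(G)}_2$ applied to $(g_n)$.

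\textbf{The inflated recurrence in (i).} Write $L^{(1)}_2c=0$ as
\[
4n(n+1)(n+2)^2c_{n+2}=n(2n+3)(2n^2+6n+5)c_{n+1}-(n+1)^4c_n,
\]
shift $n\mapsto n-1$, and substitute $c_n=u_n/\bigl(4^n(n-1)!(n!)^2\bigr)$. The ratios of normalizers are
\[
\frac{N_{n+1}}{N_n}=4n(n+1)^2,\qquad \frac{N_{n+1}}{N_{n-1}}=16n(n-1)n^2(n+1)^2 ,
\]
where $N_n=4^n(n-1)!(n!)^2$. Multiplying the shifted recurrence by $N_{n+1}/\bigl(4(n-1)n(n+1)^2\bigr)$ should turn the middle coefficient $(n-1)(2n+1)(2n^2+2n+1)$ into $(2n+1)(2n^2+2n+1)$ and the last coefficient $n^4$ into $4n^6$. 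I will check these cancellations explicitly. They are also where the restriction $n\ge2$ enters, because the factor $n-1$ must be nonzero. The PCF form \eqref{eq:firstkernelPCF} is then the standard reading of $u_{n+1}=a_nu_n+b_nu_{n-1}$ as $\PCF(a_n,b_n)$.

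\textbf{Main obstacle.} The mathematics is routine; the difficulty is bookkeeping. The hardest part is ensuring the comparison uses exactly the indexing and normalization of the printed recurrences in \cite{Raz}, which may be written with $S$ acting on $n$ shifted by one, or with a different overall scalar. I would first transcribe each printed $L_3$ and check \eqref{eq:sumzero}, since this test is independent of scaling. Only then would I match $c$ and the $\sigma$-shift convention.
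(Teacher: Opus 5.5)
Your proposal is correct and is essentially the paper's proof. The paper transcribes the printed $A_0,\dots,A_3$ from \cite{Raz}, checks $A_0+A_1+A_2+A_3=0$, reads off $B_0,B_1,B_2$ via \eqref{eq:BfromA} with the stated $c$, and obtains the inflated recurrence by substituting $u_n=4^n(n-1)!(n!)^2c_n$ into the shifted kernel, just as you do; your normalizer ratios and the resulting coefficients $(2n+1)(2n^2+2n+1)$ and $4n^6$ are right.

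One small correction: the printed recurrences are normalized so that the top coefficient $A_3$ is monic in $n$, not integer-primitive (the other coefficients are non-integral, e.g.\ $A_0=-(n+2)^4/4$ in case (i)). This is exactly why matching $A_3$ gives $c=\tfrac14,\tfrac1{144},\tfrac14$.
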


\begin{proof}
We treat the three printed recurrences one by one.

\smallskip
\noindent\textbf{(i) First printed $\pi$-recurrence.}
Appendix~B.6 of \cite[p.~21]{Raz} prints
\[
\begin{aligned}
0={}&\left(-4-8n-6n^2-2n^3-\frac{n^4}{4}\right)f(n) \\
&+\left(\frac{81}{4}+\frac{173n}{4}+\frac{65n^2}{2}+\frac{21n^3}{2}+\frac{5n^4}{4}\right)f(n+1) \\
&+\left(-\frac{137}{4}-\frac{297n}{4}-\frac{111n^2}{2}-\frac{35n^3}{2}-2n^4\right)f(n+2) \\
&+\left(18+39n+29n^2+9n^3+n^4\right)f(n+3).
\end{aligned}
\]
Hence
\[
\begin{aligned}
A_0(n)&=-\frac{(n+2)^4}{4},\\
A_1(n)&=\frac{5n^4+42n^3+130n^2+173n+81}{4},\\
A_2(n)&=-\frac{(n+1)(8n^3+62n^2+160n+137)}{4},\\
A_3(n)&=(n+1)(n+2)(n+3)^2.
\end{aligned}
\]
Expanding the sum gives
\[
4\bigl(A_0+A_1+A_2+A_3\bigr)
=-(n+2)^4+(5n^4+42n^3+130n^2+173n+81)
\]
\[
-(8n^4+70n^3+222n^2+297n+137)+(4n^4+36n^3+116n^2+156n+72)=0.
\]
Thus Lemma~\ref{lem:factor-criterion} applies with $c=\tfrac14$, and \eqref{eq:BfromA} yields
\[
B_0(n)=-(4)A_0(n-1)=(n+1)^4,
\]
\[
B_1(n)=4\bigl(A_2(n-1)+A_3(n-1)\bigr)=-n(2n+3)(2n^2+6n+5),
\]
\[
B_2(n)=4A_3(n-1)=4n(n+1)(n+2)^2,
\]
which is exactly \eqref{eq:L2firstkernel}.

For the inflation, write the kernel recurrence as
\[
(n+1)^4 c_n-n(2n+3)(2n^2+6n+5)c_{n+1}+4n(n+1)(n+2)^2c_{n+2}=0.
\]
Set $u_n=4^n(n-1)!(n!)^2c_n$ for $n\ge1$. Then
\[
\frac{u_{n+1}}{u_n}=4n(n+1)^2\,\frac{c_{n+1}}{c_n},
\qquad
\frac{u_{n-1}}{u_n}=\frac{1}{4(n-1)n^2}\,\frac{c_{n-1}}{c_n}.
\]
Substituting these relations into the kernel recurrence and simplifying gives
\[
u_{n+1}=(2n+1)(2n^2+2n+1)u_n-4n^6u_{n-1},
\]
which is \eqref{eq:firstkernelPCF}.

\smallskip
\noindent\textbf{(ii) Second printed $\pi$-recurrence.}
Appendix~B.6 of \cite[p.~21]{Raz} prints
\[
\begin{aligned}
0={}&\left(-\frac{35}{9}-\frac{26n}{3}-\frac{23n^2}{3}-\frac{121n^3}{36}-\frac{35n^4}{48}-\frac{n^5}{16}\right)f(n)\\
&+\left(-\frac{365}{9}-\frac{181n}{2}-\frac{1879n^2}{24}-\frac{1589n^3}{48}-\frac{55n^4}{8}-\frac{9n^5}{16}\right)f(n+1)\\
&+\left(-\frac{356}{9}-\frac{503n}{6}-\frac{1633n^2}{24}-\frac{1279n^3}{48}-\frac{81n^4}{16}-\frac{3n^5}{8}\right)f(n+2)\\
&+\left(84+183n+154n^2+\frac{568n^3}{9}+\frac{38n^4}{3}+n^5\right)f(n+3).
\end{aligned}
\]
Thus
\[
\begin{aligned}
A_0(n)&=-\frac{(n+2)^3(3n+7)(3n+10)}{144},\\
A_1(n)&=-\frac{(3n+10)(27n^4+240n^3+789n^2+1128n+584)}{144},\\
A_2(n)&=-\frac{(3n+4)(3n+8)(6n^3+57n^2+177n+178)}{144},\\
A_3(n)&=\frac{(n+3)^3(3n+4)(3n+7)}{9}.
\end{aligned}
\]
Expanding the sum gives
\[
144\bigl(A_0+A_1+A_2+A_3\bigr)
=-(9n^5+105n^4+484n^3+1104n^2+1248n+560)
\]
\[
-(81n^5+990n^4+4767n^3+11274n^2+13032n+5840)
\]
\[
-(54n^5+729n^4+3837n^3+9798n^2+12072n+5696)
\]
\[
+(144n^5+1824n^4+9088n^3+22176n^2+26352n+12096)=0.
\]
Hence Lemma~\ref{lem:factor-criterion} applies with $c=\tfrac1{144}$, and \eqref{eq:BfromA} gives
\[
B_0(n)=(n+1)^3(3n+4)(3n+7),
\]
\[
B_1(n)=(2n+3)(3n+1)(3n+7)(5n^2+15n+12),
\]
\[
B_2(n)=16(n+2)^3(3n+1)(3n+4),
\]
which is \eqref{eq:L2secondkernel}.

\smallskip
\noindent\textbf{(iii) Printed order-$3$ Catalan recurrence.}
Appendix~B.6 of \cite[p.~21]{Raz} prints the common order-$3$ recurrence for two formulas for Catalan's constant:
\[
\begin{aligned}
0={}&\left(-\frac32-\frac{5n}{4}-\frac{n^2}{4}\right)f(n)
+\left(\frac{21}{2}+\frac{29n}{4}+\frac{5n^2}{4}\right)f(n+1)\\
&+\left(-\frac{85}{4}-13n-2n^2\right)f(n+2)
+\left(\frac{49}{4}+7n+n^2\right)f(n+3).
\end{aligned}
\]
Thus
\[
A_0(n)=-\frac{(n+2)(n+3)}{4},\qquad
A_1(n)=\frac{(n+3)(5n+14)}{4},
\]
\[
A_2(n)=-\frac{8n^2+52n+85}{4},\qquad
A_3(n)=\frac{(2n+7)^2}{4}.
\]
A direct expansion gives
\[
4\bigl(A_0+A_1+A_2+A_3\bigr)=-(n^2+5n+6)+(5n^2+29n+42)-(8n^2+52n+85)+(4n^2+28n+49)=0.
\]
Lemma~\ref{lem:factor-criterion} now applies with $c=\tfrac14$, and \eqref{eq:BfromA} yields
\[
B_0(n)=(n+1)(n+2),\qquad B_1(n)=-4(n+2)^2,\qquad B_2(n)=(2n+5)^2,
\]
which is \eqref{eq:L2catalan}, equivalently \eqref{eq:catalankernelrec}.
\end{proof}

\begin{remark}\label{rem:fourth-order3}
Raz et al.
report four order-$3$ canonical forms for $\pi$ \cite[\S3.3]{Raz}. The public PDF prints only two explicit order-$3$ $\pi$-recurrences in Appendix~B.6, together with one explicit order-$3$ Catalan recurrence. The fourth order-$3$ $\pi$-canonical form is not explicit in the public materials used here, so Theorem~\ref{thm:three-printed-factor} intentionally concerns only the three printed recurrences.
\end{remark}

\section{Ap\'ery-like kernels and Sym$^2$-to-square functoriality}\label{sec:aperylike-clausen}

\subsection{The first kernel and the sequence A036917}

We now identify the first kernel from Theorem~\ref{thm:three-printed-factor} with the classical sequence $A036917$.

\begin{theorem}\label{thm:A036917kernel}
Let
\[
F(z)=\Ftwoone\!\left(\tfrac12,\tfrac12;1;z\right),
\qquad
F(16x)^2=\sum_{n\ge0} A_n x^n.
\]
Then:
\begin{enumerate}[label=\textup{(\alph*)}]
\item
\[
A_n=\sum_{k=0}^n \binom{2k}{k}^2\binom{2n-2k}{n-k}^2,
\]
so $(A_n)_{n\ge0}$ is the classical sequence \textup{A036917}, one of the six sporadic Almkvist--Zudilin sequences \cite{Delaygue,OEISA036917}. The connection between this sequence and series for $1/\pi$ is well established \cite{ChanVerrill,Cooper}. Its standard recurrence is
\begin{equation}\label{eq:A036917rec}
n^3A_n=8(2n-1)(2n^2-2n+1)A_{n-1}-256(n-1)^3A_{n-2}
\qquad (n\ge1).
\end{equation}

\item The generating function also admits the Clausen-type pullback representation
\begin{equation}\label{eq:A036917-3F2}
\sum_{n\ge0} A_n x^n
=\FthreeTwo\!\left(\tfrac12,\tfrac12,\tfrac12;1,1;64x(1-16x)\right).
\end{equation}

\item If
\[
c_n:=\frac{nA_n}{32^n},
\]
then $c_n$ is annihilated by the operator $L^{(1)}_2$ from \eqref{eq:L2firstkernel}. Equivalently,
\begin{equation}\label{eq:c-from-A036917}
(n+1)^4 c_n-n(2n+3)(2n^2+6n+5)c_{n+1}+4n(n+1)(n+2)^2c_{n+2}=0.
\end{equation}
In particular, the first printed order-$3$ $\pi$-recurrence in Appendix~B.6 of \cite{Raz} is the summation lift of the $A036917$ kernel.
\end{enumerate}
\end{theorem}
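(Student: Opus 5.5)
Part (a) is a Cauchy product computation. Since $\tfrac12$ appears twice in the numerator and $(\tfrac12)_k/k!=\binom{2k}{k}/4^k$, we have
\[
F(16x)=\sum_{k\ge0}\frac{(\tfrac12)_k^2}{(k!)^2}\,16^k x^k=\sum_{k\ge0}\binom{2k}{k}^2x^k .
\]
Squaring and collecting the coefficient of $x^n$ then gives the stated convolution formula for $A_n$.

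For the recurrence \eqref{eq:A036917rec} I would work at the level of differential equations rather than run a creative-telescoping computation.
\begin{enumerate}[label=\textup{(\arabic*)}]
\item $F(z)$ satisfies the Gauss equation $\theta^2F=z(\theta+\tfrac12)^2F$, where $\theta=z\,\dd/\dd z$.
\item Its square satisfies the symmetric-square (Appell/Chaundy) third-order equation; for $a=b=\tfrac12$, $c=1$ this is
\[
\theta^3G=\tfrac12 z(2\theta+1)(2\theta^2+2\theta+1)G-z^2(\theta+1)^3G .
\]
One can check this directly from step (1) by writing $\theta(F^2)=2F\theta F$ and reducing $\theta^2F$ with the Gauss equation at each differentiation. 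This is exactly the $\Sym^2$ computation described in Section~\ref{sec:preliminaries}.
\item Substitute $z=16x$ and read off coefficients of $x^n$ to obtain \eqref{eq:A036917rec}. As a consistency check, the factors $16\cdot\tfrac12=8$ and $16^2=256$ match the recurrence.
\end{enumerate}

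For part (b) I would invoke Clausen's identity in its quadratic-transformation form,
\[
\Ftwoone\!\left(\tfrac12,\tfrac12;1;z\right)^2=\FthreeTwo\!\left(\tfrac12,\tfrac12,\tfrac12;1,1;4z(1-z)\right),
\]
valid near $z=0$. It follows from Clausen's formula $\Ftwoone(a,b;a+b+\tfrac12;w)^2=\FthreeTwo(2a,2b,a+b;2a+2b,a+b+\tfrac12;w)$ with $a=b=\tfrac14$, combined with the quadratic transformation $\Ftwoone(\tfrac12,\tfrac12;1;z)=\Ftwoone(\tfrac14,\tfrac14;1;4z(1-z))$. Setting $z=16x$ gives \eqref{eq:A036917-3F2}. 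Alternatively, both sides satisfy the same third-order ODE and agree to order $x^2$, so by uniqueness of the analytic solution at the point of maximal unipotent monodromy it suffices to check the pulled-back $\FthreeTwo$ equation; I would cite Clausen to avoid this.

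Part (c) is the main computation, though it is purely mechanical.
\begin{enumerate}[label=\textup{(\arabic*)}]
\item Substitute $A_n=32^n c_n/n$ into \eqref{eq:A036917rec}, shifted to index $n+2$.
\item Multiply through by $(n+2)(n+1)n\cdot 32^{-n-2}$ and clear denominators.
\end{enumerate}
The leading coefficient becomes $(n+2)^3\cdot n(n+1)/\bigl((n+2)\,n(n+1)\bigr)$ up to normalisation, and one has to check that the three coefficients become proportional to $4n(n+1)(n+2)^2$, $-n(2n+3)(2n^2+6n+5)$ and $(n+1)^4$. The ratio checks are encouraging:
\begin{itemize}
\item $256/32^2=\tfrac14$ relates the outer coefficients, giving $(n+1)^3\cdot\tfrac{n+2}{n}\cdot\ldots$;
\item $8/32=\tfrac14$ handles the middle coefficient;
\item $(2n+3)(2(n+2)^2-2(n+2)+1)=(2n+3)(2n^2+6n+5)$ matches.
\end{itemize}
The main obstacle is the bookkeeping of the $n$-dependent factors coming from $1/n$, $1/(n+1)$ and $1/(n+2)$. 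For instance, the middle term picks up $(n+2)\,n/(n+1)$-type ratios, which must cancel against the factor $n$ in $L^{(1)}_2$; I would verify this by clearing denominators and expanding polynomials.

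A related subtlety is the boundary index $n=0$, where $c_0=0$. I would check \eqref{eq:c-from-A036917} there using $A_0=1$, $A_1=8$ and $A_2=96$ (so $c_1=\tfrac14$, $c_2=\tfrac{3}{16}$). The final sentence of (c) then follows immediately from Theorem~\ref{thm:three-printed-factor}(i) and Remark~\ref{rem:summation-lift}.
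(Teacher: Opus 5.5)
Your proposal is correct and follows the paper's proof. For (a) you use the Cauchy product; for (b) you use the Clausen/quadratic-transformation identity ${}_2F_1(\tfrac12,\tfrac12;1;u)^2={}_3F_2(\tfrac12,\tfrac12,\tfrac12;1,1;4u(1-u))$ with $u=16x$; and for (c) you substitute $A_n=32^nc_n/n$ into the shifted recurrence. The one difference is that you derive \eqref{eq:A036917rec} from the symmetric-square ODE, whereas this proof only cites it; the paper does that same derivation later, in Theorem~\ref{thm:theta-bridge}(d).

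There is one small arithmetic slip: $A_2=36+16+36=88$, so $c_2=11/64$, not $A_2=96$ and $c_2=3/16$. It is harmless, because at $n=0$ the coefficients of $c_1$ and $c_2$ in \eqref{eq:c-from-A036917} vanish and the relation reduces to $c_0=0$.
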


\begin{proof}
For part \textup{(a)}, recall the well-known binomial expansion
\[
\Ftwoone\!\left(\tfrac12,\tfrac12;1;z\right)=\sum_{m\ge0}\frac{\left(\frac12\right)_m^2}{(m!)^2}z^m
=\sum_{m\ge0}\binom{2m}{m}^2\left(\frac{z}{16}\right)^m.
\]
Substituting $z=16x$ gives
\[
F(16x)=\sum_{m\ge0}\binom{2m}{m}^2 x^m.
\]
Squaring and taking the Cauchy product yields
\[
F(16x)^2=\sum_{n\ge0}\Bigl(\sum_{k=0}^n\binom{2k}{k}^2\binom{2n-2k}{n-k}^2\Bigr)x^n,
\]
which proves the convolution formula. The identification with A036917 and the recurrence \eqref{eq:A036917rec} are standard and recorded in \cite{Delaygue,OEISA036917}.

For part \textup{(b)}, we use the classical Clausen-type pullback identity
\begin{equation}\label{eq:clausen-pullback}
\FthreeTwo\!\left(\tfrac12,\tfrac12,\tfrac12;1,1;4u(1-u)\right)
=\Ftwoone\!\left(\tfrac12,\tfrac12;1;u\right)^2,
\end{equation}
see, for example, \cite[Eq.~(15) with $r=\tfrac12$]{Zudilin}. Setting $u=16x$ gives
\[
4u(1-u)=64x(1-16x),
\]
and therefore
\[
\FthreeTwo\!\left(\tfrac12,\tfrac12,\tfrac12;1,1;64x(1-16x)\right)
=\Ftwoone\!\left(\tfrac12,\tfrac12;1;16x\right)^2.
\]
The elementary identity
\[
(1-32x)^2=1-64x(1-16x)
\]
shows explicitly that the pullback is quadratic and regular at the origin.

For part \textup{(c)}, shift \eqref{eq:A036917rec} by one unit:
\[
(n+1)^3A_{n+1}=8(2n+1)(2n^2+2n+1)A_n-256n^3A_{n-1}.
\]
Now substitute
\[
A_n=\frac{32^n}{n}c_n \qquad (n\ge1).
\]
A direct substitution into \eqref{eq:A036917rec}, followed by division by $32^n$, clearing of the resulting denominators $n$ and $n-1$, and re-indexing, yields \eqref{eq:c-from-A036917}. This is precisely the kernel operator \eqref{eq:L2firstkernel} extracted in Theorem~\ref{thm:three-printed-factor}(i).
\end{proof}

\begin{remark}\label{rem:A036917-not-trajectory}
Theorem~\ref{thm:A036917kernel} does \emph{not} say that the current rank-$2$ $\pi$-CMF of \cite{Raz} already contains the $A036917$ kernel as a trajectory. What is proved here is a kernel-level identification. The intrinsic CMF origin of this kernel is supplied later by Theorem~\ref{thm:theta-bridge}, and it uses the differential component $M_\theta$ of the ambient Gauss CMF rather than a contiguous trajectory.
\end{remark}

\subsection{Identification of the remaining two kernels}

We now identify the second $\pi$-kernel and the Catalan kernel from Theorem~\ref{thm:three-printed-factor}.

\begin{theorem}[second $\pi$-kernel = Domb numbers]\label{thm:domb}
The second printed $\pi$-kernel \eqref{eq:L2secondkernel} is, after rescaling, the Domb number recurrence \textup{(OEIS A002895)} \cite{OEISA002895}. Specifically, if $(c_n)$ is annihilated by $L_2^{(2)}$, then the substitution
\begin{equation}\label{eq:domb-rescaling}
u_n=\frac{(-32)^n}{3n+1}\,c_n
\end{equation}
transforms the kernel recurrence into the Domb recurrence
\begin{equation}\label{eq:domb-rec}
(n+1)^3u_{n+1}=2(2n+1)(5n^2+5n+2)\,u_n-64n^3\,u_{n-1}.
\end{equation}
In the Almkvist--Zudilin/Delaygue classification this is case~$(\alpha)$ \cite{Delaygue,Gorodetsky}.
\end{theorem}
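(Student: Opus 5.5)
The plan is a direct substitution into the kernel recurrence, followed by removal of a common polynomial factor. First I write the operator \eqref{eq:L2secondkernel} as a recurrence:
\[
B_0(n)c_n+B_1(n)c_{n+1}+B_2(n)c_{n+2}=0,
\]
where the $B_i$ are the coefficients obtained in Theorem~\ref{thm:three-printed-factor}(ii). Inverting \eqref{eq:domb-rescaling} gives $c_n=(3n+1)(-32)^{-n}u_n$, and I substitute this into the recurrence. Multiplying through by $(-32)^{n+2}$ clears the exponential factors, leaving
\[
1024(3n+1)B_0(n)\,u_n-32(3n+4)B_1(n)\,u_{n+1}+(3n+7)B_2(n)\,u_{n+2}=0 .
\]

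The key step is to observe that the three coefficients share the factor $16(3n+1)(3n+4)(3n+7)$. This works because the linear factors of the $B_i$ and the rescaling factors $3n+1,3n+4,3n+7$ are complementary:
\begin{itemize}[label={}]
\item $B_0$ carries $(3n+4)(3n+7)$, and the rescaling supplies $3n+1$;
\item $B_1$ carries $(3n+1)(3n+7)$, and the rescaling supplies $3n+4$;
\item $B_2$ carries $(3n+1)(3n+4)$, and the rescaling supplies $3n+7$.
\end{itemize}
The choice of the factor $3n+1$ in \eqref{eq:domb-rescaling} is made precisely so that this cancellation happens. Dividing by the common factor should give
\[
64(n+1)^3u_n-2(2n+3)(5n^2+15n+12)\,u_{n+1}+(n+2)^3u_{n+2}=0 .
\]

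Finally I compare this with \eqref{eq:domb-rec} shifted by $n\mapsto n+1$. The only nontrivial check is the quadratic factor, namely
\[
5(n+1)^2+5(n+1)+2=5n^2+15n+12 ,
\]
and the two recurrences then agree.

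The identification with OEIS~A002895 and with case~$(\alpha)$ then follows from the standard fact that the Domb numbers $\sum_k\binom{n}{k}^2\binom{2k}{k}\binom{2n-2k}{n-k}$ satisfy \eqref{eq:domb-rec}, as recorded in \cite{OEISA002895,Delaygue,Gorodetsky}. I would also note explicitly that the substitution is invertible for all $n\ge0$, since $3n+1\neq0$. So the kernel solution space is carried bijectively onto the Domb solution space, and the statement is genuinely a rescaling rather than a one-way implication.

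I expect no real obstacle here. The only point requiring care is the bookkeeping of the powers of $-32$ and the sign of the middle term; it is exactly this sign that converts the all-positive coefficients of $L_2^{(2)}$ into the alternating Domb recurrence.
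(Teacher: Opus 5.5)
Your proposal is correct and is essentially the paper's proof: substitute $c_n=(3n+1)(-32)^{-n}u_n$, cancel the common factor $(3n+1)(3n+4)(3n+7)$, and shift to match the Domb recurrence. The paper divides by $(3n+1)(3n+4)(3n+7)(-32)^{-n}$ and then multiplies by $64$, which is equivalent to your multiplying by $(-32)^{n+2}$ and dividing by $16(3n+1)(3n+4)(3n+7)$. Your remark that the rescaling is invertible because $3n+1\neq 0$ is a small clarification the paper leaves implicit.
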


\begin{proof}
Write the kernel recurrence from \eqref{eq:L2secondkernel} as
\[
\begin{aligned}
0={}&(n+1)^3(3n+4)(3n+7)\,c_n \\
&+(2n+3)(3n+1)(3n+7)(5n^2+15n+12)\,c_{n+1} \\
&+16(n+2)^3(3n+1)(3n+4)\,c_{n+2}.
\end{aligned}
\]
Substituting $c_n=(3n+1)/(-32)^n\cdot u_n$, $c_{n+1}=(3n+4)/(-32)^{n+1}\cdot u_{n+1}$, $c_{n+2}=(3n+7)/(-32)^{n+2}\cdot u_{n+2}$ and dividing by $(3n+1)(3n+4)(3n+7)/(-32)^n$ gives
\[
(n+1)^3\,u_n
-\frac{(2n+3)(5n^2+15n+12)}{32}\,u_{n+1}
+\frac{(n+2)^3}{64}\,u_{n+2}=0.
\]
Multiplying by $64$ yields
\[
64(n+1)^3\,u_n-2(2n+3)(5n^2+15n+12)\,u_{n+1}+(n+2)^3\,u_{n+2}=0.
\]
Shifting $n\mapsto n-1$:
\[
(n+1)^3u_{n+1}-2(2n+1)(5n^2+5n+2)\,u_n+64n^3\,u_{n-1}=0,
\]
which is \eqref{eq:domb-rec}. This is exactly the standard recurrence of the Domb numbers $A002895$ \cite{OEISA002895}.
\end{proof}

\begin{theorem}[Catalan kernel from the Gauss-square family]\label{thm:catalan-square}
The printed Catalan kernel \eqref{eq:catalankernelrec} arises from the Gauss-square coefficient sequence at $(a,b,c)=(\tfrac12,1,\tfrac32)$ via a hypergeometric twist.

Specifically, let $g_n=[z^n]{}_2F_1(\tfrac12,1;\tfrac32;z)^2$, which satisfies the coefficient recurrence \eqref{eq:general-coeff-rec} at $(a,b,c)=(\tfrac12,1,\tfrac32)$:
\begin{equation}\label{eq:catalan-gauss-rec}
n(n+1)(2n+1)\,g_n-4n^3\,g_{n-1}+n(n-1)(2n-1)\,g_{n-2}=0.
\end{equation}
If the Catalan summand sequence $(c_n)$ from Theorem~\textup{\ref{thm:three-printed-factor}(iii)} satisfies \eqref{eq:catalankernelrec}, then there exists a nonzero constant $\kappa$ such that
\begin{equation}\label{eq:catalan-twist}
c_n=\kappa\,\frac{(n+1)!}{(2n+1)!!}\,g_n.
\end{equation}
With the normalization $c_0=\tfrac12$ and $g_0=1$, one has $\kappa=\tfrac12$, so \eqref{eq:catalan-twist} becomes
\[
c_n=\frac{(n+1)!}{2(2n+1)!!}\,g_n.
\]
In particular, together with Theorem~\ref{thm:theta-bridge} specialized at $(a,b,c)=(\tfrac12,1,\tfrac32)$ and Theorem~\ref{thm:three-printed-factor}(iii), this identifies the printed order-$3$ Catalan formula with the chain
\[
\text{ambient Gauss CMF}\longrightarrow \Sym^2 \longrightarrow M_\theta \longrightarrow \text{Catalan kernel}\longrightarrow \text{summation}.
\]
\end{theorem}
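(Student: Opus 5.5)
The plan is to reduce the statement to a direct gauge computation between two explicit order-$2$ recurrences, followed by a short initial-value argument.

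\textbf{Step 1: the recurrence for $g_n$.} I would first confirm \eqref{eq:catalan-gauss-rec}. This can be done by specializing the general Gauss-square coefficient recurrence \eqref{eq:general-coeff-rec} at $(a,b,c)=(\tfrac12,1,\tfrac32)$. As an independent check, note that ${}_2F_1(\tfrac12,1;\tfrac32;z)=\operatorname{artanh}(\sqrt z)/\sqrt z$. Hence $g(z)=\operatorname{artanh}(\sqrt z)^2/z$, and one can derive the ODE for $g$ and read off the coefficient recurrence.

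At $n=0$ every coefficient of \eqref{eq:catalan-gauss-rec} vanishes. At $n=1$ the term in $g_{n-2}$ drops out and we get $6g_1=4g_0$. So $(g_n)$ is the solution of \eqref{eq:catalan-gauss-rec} determined by $g_0$ alone: $g_0=1$ and $g_1=\tfrac23$.

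\textbf{Step 2: the twist.} Set $w_n=(n+1)!/(2n+1)!!$, so that $w_{n+1}/w_n=(n+2)/(2n+3)$. Substitute $c_n=w_n h_n$ into \eqref{eq:catalankernelrec}, divide by $w_n$, and multiply by $(2n+3)/(n+2)$. The kernel becomes
\[
(n+3)(2n+5)h_{n+2}-4(n+2)^2h_{n+1}+(n+1)(2n+3)h_n=0 .
\]
Now shift \eqref{eq:catalan-gauss-rec} by $n\mapsto n+2$ and divide by $n+2$. The result is exactly the same operator. Thus $c_n\mapsto c_n/w_n$ maps the solution space of \eqref{eq:catalankernelrec} bijectively onto that of \eqref{eq:catalan-gauss-rec}, and in particular $w_n g_n$ solves the Catalan kernel.

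\textbf{Step 3: pinning down $c_n$ (the main obstacle).} Both recurrences are of order $2$, so the twist identity alone does not force $c_n$ to be proportional to $w_ng_n$. That requires $(c_n/w_n)$ to be the one-dimensional subspace of solutions singled out in Step 1, which cannot be seen from the recurrence alone. I would resolve this with the initial data of the actual Catalan summand sequence attached to the printed formula.

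With $c_0=\tfrac12$ and $w_0=g_0=1$ we get $\kappa=\tfrac12$. It then remains to check that the printed summand has $c_1=\kappa w_1g_1=\tfrac12\cdot\tfrac23\cdot\tfrac23=\tfrac29$. Once $c_0$ and $c_1$ agree, uniqueness of solutions of \eqref{eq:catalankernelrec} with given $(c_0,c_1)$ gives \eqref{eq:catalan-twist} for all $n$; the leading coefficient $(2n+5)^2$ never vanishes, so every later term is determined.

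If the explicit summand is not available, I would instead state the result for the solution of \eqref{eq:catalankernelrec} with $(c_0,c_1)=(\tfrac12,\tfrac29)$. This amounts to the statement that $c_n/w_n$ is the solution of \eqref{eq:catalan-gauss-rec} whose power series is analytic at $z=0$.

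\textbf{Step 4: the chain.} The final displayed chain then follows by concatenating three pieces: Theorem~\ref{thm:theta-bridge} at $(\tfrac12,1,\tfrac32)$, which produces \eqref{eq:catalan-gauss-rec}; the twist established in Steps 2 and 3; and Theorem~\ref{thm:three-printed-factor}(iii), which supplies the summation lift.
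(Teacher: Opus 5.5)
Your proposal is correct and follows the paper's proof. The paper substitutes $c_n=t_ng_n$, eliminates $g_{n+2}$ with the shifted Gauss-square recurrence to force $t_{n+1}/t_n=(n+2)/(2n+3)$, then fixes $\kappa=\tfrac12$ from $c_0$ and confirms $c_1=\tfrac29$ against the printed summand; this is exactly your twist-plus-two-initial-values argument. Your operator-level phrasing (the twisted kernel equals the shifted recurrence divided by $n+2$) is a slightly cleaner form of the same computation, and you are right that the $n=1$ check, not the twist alone, is what pins down the solution.
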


\begin{proof}
Set $c_n=t_n\,g_n$ in \eqref{eq:catalankernelrec}:
\[
(2n+5)^2\,t_{n+2}\,g_{n+2}-4(n+2)^2\,t_{n+1}\,g_{n+1}+(n+1)(n+2)\,t_n\,g_n=0.
\]
From \eqref{eq:catalan-gauss-rec} shifted by $2$, we can express $g_{n+2}$ in terms of $g_{n+1}$ and $g_n$:
\[
(n+3)(2n+5)\,g_{n+2}=4(n+2)^2\,g_{n+1}-(n+1)(2n+3)\,g_n.
\]
Substituting and collecting the coefficients of $g_{n+1}$ and $g_n$ gives
\[
4(n+2)^2\left(\frac{(2n+5)\,t_{n+2}}{n+3}-t_{n+1}\right)g_{n+1}
+(n+1)\left((n+2)\,t_n-\frac{(2n+3)(2n+5)}{n+3}\,t_{n+2}\right)g_n=0.
\]
Hence both coefficients vanish identically provided
\[
\frac{t_{n+1}}{t_n}=\frac{n+2}{2n+3}\qquad(n\ge0).
\]
This first-order ratio recurrence has the general solution
\[
t_n=\kappa\prod_{k=0}^{n-1}\frac{k+2}{2k+3}
=\kappa\,\frac{(n+1)!}{(2n+1)!!}
\]
for an arbitrary nonzero constant \(\kappa\), proving \eqref{eq:catalan-twist}. At $n=0$: $t_0=\kappa\cdot 1!/1!!=\kappa$ and $g_0=1$, so $c_0=t_0 g_0=\kappa$. With the printed Catalan normalization $c_0=\tfrac12$, this gives $\kappa=\tfrac12$. Since the Catalan kernel recurrence \eqref{eq:catalankernelrec} has order~$2$, any solution is determined by two consecutive initial values. To confirm agreement at $n=1$: $t_1=\kappa\cdot 2/3=1/3$ and $g_1=2/3$, giving $c_1=t_1 g_1=2/9$, which matches the printed Catalan summand. Hence $c_n=t_n g_n$ for all $n\ge0$.
\end{proof}

\begin{remark}[summary of kernel identifications]\label{rem:kernel-table}
Table~\ref{tab:kernels} collects the three kernel identifications. The first $\pi$-kernel and the Catalan kernel are explicit rescalings of coefficient sequences of Gauss squares ${}_2F_1(a,b;c;z)^2$ at specific parameter values; the second $\pi$-kernel (Domb numbers) is related to the ${}_2F_1$-world via an algebraic pullback rather than a same-variable square \cite{Gorodetsky}. All three are thus connected to the $\Sym^2({}_2F_1)$ framework, though by different mechanisms.
\end{remark}

\begin{table}[ht]
\centering
\caption{Identification of the three printed order-$3$ kernels.}\label{tab:kernels}
\smallskip
\renewcommand{\arraystretch}{1.15}
\begin{tabular}{@{}lcccc@{}}
\toprule
Kernel & OEIS / type & AZ case & Rescaling & Relation to ${}_2F_1^2$ \\
\midrule
First $\pi$ & $A036917$ (sporadic) & $(\beta)$ & $c_n=nA_n/32^n$ & rescaled from $(\tfrac12,\tfrac12,1)$ \\
Second $\pi$ & $A002895$, Domb (sporadic) & $(\alpha)$ & $u_n=(-32)^n c_n/(3n\!+\!1)$ & pullback-related \\
Catalan & non-sporadic & --- & $c_n=\kappa\,\dfrac{(n+1)!}{(2n+1)!!}\,g_n$ & twist from $(\tfrac12,1,\tfrac32)$ \\
\bottomrule
\end{tabular}

\smallskip
\parbox{0.9\linewidth}{\footnotesize For the Catalan kernel, \(\kappa\neq 0\) is arbitrary; the normalization used in Theorem~\ref{thm:catalan-square} is \(\kappa=\tfrac12\).}
\end{table}

\subsection{General square-gauge for Gauss CMFs}

For the remainder of this section let
\[
f(a,b,c;z):=\Ftwoone(a,b;c;z),\qquad g(a,b,c;z):=f(a,b,c;z)^2,
\]
and let $\theta=z\,\dd/\dd z$.

\begin{proposition}[general square-gauge for the Gauss CMF]\label{prop:general-square-gauge}
Let
\[
B_f=(f,\theta f),\qquad
B_{\mathrm{sym}}=(f^2,f\theta f,(\theta f)^2),\qquad
B_g=(g,\theta g,\theta^2 g).
\]
Assume that we work away from the rank-drop locus, so that for generic parameters the three entries of $B_{\mathrm{sym}}$ and of $B_g$ form bases of the rank-$3$ D-finite module generated by $g$. Then:
\begin{enumerate}[label=\textup{(\alph*)}]
\item the basis-change matrix from $B_{\mathrm{sym}}$ to $B_g$ is
\begin{equation}\label{eq:Phi-general}
\Phi(a,b,c;z)=
\begin{pmatrix}
1 & 0 & \dfrac{2abz}{1-z}\\[2mm]
0 & 2 & \dfrac{2((a+b)z-c+1)}{1-z}\\[2mm]
0 & 0 & 2
\end{pmatrix},
\qquad \det \Phi = 4;
\end{equation}
that is,
\begin{equation}\label{eq:basis-relation-general}
B_{\mathrm{sym}}\,\Phi(a,b,c;z)=B_g.
\end{equation}

\item for every parameter shift $u\in\Z^3$ for which the corresponding contiguous matrices are defined, the CMF generator of $g$ in basis $B_g$ is related to the Gauss-CMF generator of $f$ by
\begin{equation}\label{eq:general-square-gauge}
M_u^{(g)}(a,b,c;z)=\Phi(a,b,c;z)^{-1}\,\Sym^2\!\bigl(M_u^{(f)}(a,b,c;z)\bigr)\,\sigma_u(\Phi)(a,b,c;z).
\end{equation}
\end{enumerate}
\end{proposition}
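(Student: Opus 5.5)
Part (a) is a direct computation using the Gauss equation in $\theta$-form. Write
\[
\theta(\theta+c-1)f=z(\theta+a)(\theta+b)f,
\]
so that
\[
(1-z)\theta^2 f=\bigl((a+b)z-c+1\bigr)\theta f+abz\,f ,
\]
that is, $\theta^2 f=\frac{abz}{1-z}f+\frac{(a+b)z-c+1}{1-z}\theta f$. I would then compute the entries of $B_g$ in terms of $B_{\mathrm{sym}}$:
\[
g=f^2,\qquad \theta g=2f\theta f,\qquad \theta^2 g=2(\theta f)^2+2f\,\theta^2 f .
\]
Substituting the reduction of $\theta^2 f$ into the last expression gives
\[
\theta^2 g=\frac{2abz}{1-z}\,f^2+\frac{2((a+b)z-c+1)}{1-z}\,f\theta f+2(\theta f)^2 .
\]
Since $B_g=B_{\mathrm{sym}}\Phi$ holds with basis vectors written as row vectors, the columns of $\Phi$ are exactly these coordinate vectors. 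This reproduces \eqref{eq:Phi-general}. The matrix is upper triangular, so $\det\Phi=1\cdot2\cdot2=4$. Because $\det\Phi$ is a nonzero constant, $\Phi$ is invertible over $\Q(a,b,c,z)$. Hence $B_{\mathrm{sym}}$ is a basis exactly when $B_g$ is, which is the content of the genericity hypothesis.

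For part (b), let $M_u^{(f)}$ denote the Gauss-CMF generator. By the convention of \cite{Weinbaum}, it satisfies $\sigma_u(B_f)=B_f\,M_u^{(f)}$, where $\sigma_u(B_f)=(f(\cdot+u),\theta f(\cdot+u))$. Set $u'=\sigma_u f$ and $v'=\sigma_u\theta f$. The symmetric-square rule \eqref{eq:sym2formula} then gives
\[
(u'^2,u'v',v'^2)=B_{\mathrm{sym}}\,\Sym^2(M_u^{(f)}),
\]
that is, $\sigma_u(B_{\mathrm{sym}})=B_{\mathrm{sym}}\Sym^2(M_u^{(f)})$. This is just the expansion of products of the linear forms. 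Note that the middle basis vector is $uv$, not $2uv$, consistent with the $2ac$ and $ad+bc$ entries.

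Applying $\sigma_u$ to \eqref{eq:basis-relation-general} gives
\[
\sigma_u(B_g)=\sigma_u(B_{\mathrm{sym}})\,\sigma_u(\Phi)=B_{\mathrm{sym}}\Sym^2(M_u^{(f)})\,\sigma_u(\Phi)=B_g\,\Phi^{-1}\Sym^2(M_u^{(f)})\sigma_u(\Phi).
\]
Since $B_g$ is a basis, the representing matrix $M_u^{(g)}$ is unique. Therefore it equals $\Phi^{-1}\Sym^2(M_u^{(f)})\sigma_u(\Phi)$, which is \eqref{eq:general-square-gauge}. This is the coboundary statement of \cite[Proposition~3.2]{Weinbaum}, and the cocycle property is inherited automatically.

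I expect the main obstacle to be bookkeeping rather than mathematics, in two places. First, I must fix the row-vector and right-action convention consistently, so that the formula reads $\Phi^{-1}(\cdot)\sigma_u(\Phi)$ and not its transpose or inverse. Second, I must justify that the shift $\sigma_u$ commutes with $\theta$. This holds because $\theta$ acts on $z$ while $u$ shifts only $(a,b,c)$, so $\sigma_u(\theta f)=\theta\sigma_u(f)$ and squaring commutes with shifting. I would also note that "defined" excludes parameter values where $M_u^{(f)}$ has poles or where $f$ degenerates, for example $c\in\Z_{\le0}$ or $ab=0$. At such values the rank can drop, and these are excluded by hypothesis.
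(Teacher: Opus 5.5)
Your proposal is correct and follows the paper's proof essentially step by step. In part (a) you substitute the Euler-form Gauss equation into $\theta^2 g=2(\theta f)^2+2f\,\theta^2 f$, and in part (b) you use the transport $\sigma_u(B_g)=\sigma_u(B_{\mathrm{sym}})\,\sigma_u(\Phi)=B_{\mathrm{sym}}\,\Sym^2(M_u^{(f)})\,\sigma_u(\Phi)$, exactly as the paper does. The only cosmetic difference is the last step: you conclude by uniqueness of coordinates in the basis $B_g$ after rewriting $B_{\mathrm{sym}}=B_g\Phi^{-1}$, whereas the paper cancels $B_{\mathrm{sym}}$ on the left, and these are equivalent since $\det\Phi=4$.
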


\begin{proof}
By \cite[Example~3.3]{Weinbaum}, the Gauss hypergeometric function satisfies the Euler-form differential equation
\begin{equation}\label{eq:gauss-ode-general}
(1-z)\theta^2 f=abz\,f+\bigl((a+b)z-c+1\bigr)\theta f.
\end{equation}
From $g=f^2$ we get immediately
\[
g=f^2,\qquad \theta g=2f\theta f.
\]
Differentiating once more and using \eqref{eq:gauss-ode-general},
\[
\theta^2 g=\theta(2f\theta f)=2(\theta f)^2+2f\theta^2 f
=2(\theta f)^2+\frac{2((a+b)z-c+1)}{1-z}f\theta f+\frac{2abz}{1-z}f^2.
\]
These three identities are exactly the column equations encoded by \eqref{eq:basis-relation-general}; thus \eqref{eq:Phi-general} is the desired basis-change matrix, and $\det\Phi=4$ is immediate from its triangular form.

For part \textup{(b)}, by definition of the Gauss CMF,
\[
B_f\,M_u^{(f)}=\sigma_u(B_f).
\]
Since $B_{\mathrm{sym}}$ consists of quadratic monomials in the components of $B_f$, the defining formula \eqref{eq:sym2formula} for the symmetric square gives
\[
B_{\mathrm{sym}}\,\Sym^2(M_u^{(f)})=\sigma_u(B_{\mathrm{sym}}).
\]
On the other hand, by definition of the CMF generated by $g$ in the basis $B_g$,
\[
B_g\,M_u^{(g)}=\sigma_u(B_g).
\]
Using $B_g=B_{\mathrm{sym}}\Phi$ from part \textup{(a)}, we obtain
\[
B_{\mathrm{sym}}\Phi\,M_u^{(g)}
=\sigma_u(B_g)
=\sigma_u(B_{\mathrm{sym}})\sigma_u(\Phi)
=B_{\mathrm{sym}}\Sym^2(M_u^{(f)})\,\sigma_u(\Phi).
\]
Since the entries of $B_{\mathrm{sym}}$ form a basis, they may be cancelled on the left, yielding
\[
\Phi\,M_u^{(g)}=\Sym^2(M_u^{(f)})\,\sigma_u(\Phi),
\]
which is equivalent to \eqref{eq:general-square-gauge}. This is an explicit specialization of the abstract basis-change principle in \cite[Proposition~3.2]{Weinbaum}. On the rank-drop locus the same rational identities remain valid, but the interpretation is then as a relation between spanning triples inside a smaller module rather than between two rank-$3$ bases.
\end{proof}

\subsection{Pure Clausen corollary}

The pure Clausen identity is the special case in which the square itself becomes a hypergeometric function with the same evaluation variable.

\begin{corollary}[pure Clausen functoriality]\label{cor:clausen-functoriality}
Let
\[
f(a,b;z):=\Ftwoone\!\left(a,b;a+b+\tfrac12;z\right),
\]
and
\[
h(a,b;z):=\FthreeTwo\!\left(2a,2b,a+b;2a+2b,a+b+\tfrac12;z\right).
\]
Then \(h(a,b;z)=f(a,b;z)^2\). Let
\[
u=(1,0,1),\qquad v=(0,1,1)
\]
be the parameter-preserving shifts on the Gauss side, and let
\[
U=(2,0,1;2,1),\qquad V=(0,2,1;2,1)
\]
be the induced shifts on the \(\FthreeTwo\) side. If \(M_u^{(2F_1)},M_v^{(2F_1)}\) denote the restricted Gauss-CMF generators and \(M_U^{(3F_2)},M_V^{(3F_2)}\) the corresponding \(\FthreeTwo\)-CMF generators in the basis \((h,\theta h,\theta^2 h)\), then
\begin{equation}\label{eq:clausen-gauge-u-new}
M_U^{(3F_2)}=\Phi^{-1}\,\Sym^2\!\bigl(M_u^{(2F_1)}\bigr)\,\sigma_u(\Phi),
\end{equation}
\begin{equation}\label{eq:clausen-gauge-v-new}
M_V^{(3F_2)}=\Phi^{-1}\,\Sym^2\!\bigl(M_v^{(2F_1)}\bigr)\,\sigma_v(\Phi),
\end{equation}
where \(\Phi\) is \eqref{eq:Phi-general} specialized to \(c=a+b+\tfrac12\).
\end{corollary}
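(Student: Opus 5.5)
The corollary splits into two claims: the function identity $h=f^2$, and the gauge relations \eqref{eq:clausen-gauge-u-new}--\eqref{eq:clausen-gauge-v-new}. The second claim should follow from Proposition~\ref{prop:general-square-gauge}(b) once we know that the $\FthreeTwo$-CMF generators in the basis $(h,\theta h,\theta^2 h)$ \emph{are} the CMF generators of $g=f^2$ in the basis $B_g$, restricted to the Clausen slice $c=a+b+\tfrac12$. So the argument has three steps: Clausen's identity, a compatibility check of the shifts with the slice, and transport of Proposition~\ref{prop:general-square-gauge}.

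\textbf{Step 1 (Clausen).} I will cite Clausen's classical identity
\[
\Ftwoone\!\left(a,b;a+b+\tfrac12;z\right)^2=\FthreeTwo\!\left(2a,2b,a+b;2a+2b,a+b+\tfrac12;z\right),
\]
which gives $h=f^2$. Alternatively, one can check it intrinsically. Specialize Chaundy's third-order operator for $f^2$ (the $M_\theta$ component mentioned in the introduction) at $c=a+b+\tfrac12$. Then compare it with the $\FthreeTwo$ operator
\[
\theta(\theta+2a+2b-1)\!\left(\theta+a+b-\tfrac12\right)-z(\theta+2a)(\theta+2b)(\theta+a+b).
\]
If the two operators agree, both sides are power series solutions with constant term $1$ at a point whose exponents are $0,\,1-2a-2b,\,\tfrac12-a-b$. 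For generic parameters the analytic solution normalized to $1$ is unique, so the identity follows.

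\textbf{Step 2 (compatibility of shifts).} Take $u=(1,0,1)$. It maps $(a,b,a+b+\tfrac12)$ to $(a+1,b,a+b+\tfrac32)$, which stays on the slice. On the $\FthreeTwo$ side, the parameters $(2a,2b,a+b;2a+2b,a+b+\tfrac12)$ move to $(2a+2,2b,a+b+1;2a+2b+2,a+b+\tfrac32)$. This is exactly the increment $U=(2,0,1;2,1)$, and by symmetry $V$ corresponds to $v$. Hence $\sigma_U(h)=\sigma_u(f)^2=\sigma_u(g)$ as functions of $(a,b,z)$. Moreover, $\theta$ commutes with parameter shifts, so $\sigma_U$ applied to the basis $(h,\theta h,\theta^2h)$ equals $\sigma_u(B_g)$ restricted to the slice.

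\textbf{Step 3 (transport).} Both $M_U^{(3F_2)}$ and the restricted $M_u^{(g)}$ are defined by the same equation,
\[
(h,\theta h,\theta^2h)\,M=\sigma_u(h,\theta h,\theta^2h).
\]
The entries of $(h,\theta h,\theta^2 h)$ are linearly independent over rational functions, because $h$ satisfies an irreducible order-$3$ equation for generic $(a,b)$. The generator is therefore unique, and so $M_U^{(3F_2)}=M_u^{(g)}|_{c=a+b+1/2}$. Now restrict \eqref{eq:general-square-gauge} to the slice, noting that $\sigma_u$ preserves it so the restriction commutes with the shift. This gives \eqref{eq:clausen-gauge-u-new}, and \eqref{eq:clausen-gauge-v-new} follows in the same way.

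\textbf{Main obstacle.} The delicate point is the rank condition in Proposition~\ref{prop:general-square-gauge}. On the Clausen slice the symmetric square must still have rank $3$, meaning $f^2$ does not satisfy a lower-order equation there. This is needed both for $B_{\mathrm{sym}}$ to be a basis and for the generator to be uniquely determined. I would argue it from the local exponents at $0$: for generic $a,b$ they are distinct modulo $\Z$, and $f$'s monodromy group contains a non-scalar element with distinct eigenvalues, which rules out a rank drop of the symmetric square. For non-generic $(a,b)$, the same rational identities should be taken to hold by specialization.
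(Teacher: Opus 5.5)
Your proposal is correct and follows the paper's proof. The paper likewise cites Clausen's identity, notes that $u,v$ preserve the slice $c=a+b+\tfrac12$ and induce $U,V$, and applies Proposition~\ref{prop:general-square-gauge}(b) to $g=f^2=h$ there; your uniqueness argument in Step~3 usefully makes explicit what the paper leaves to the standing generic-rank assumption.

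One caution about the rank-$3$ condition. It is properly supplied by the irreducibility of the $\FthreeTwo$ equation for generic $(a,b)$, as you say in Step~3, together with $B_{\mathrm{sym}}=B_g\Phi^{-1}$. The monodromy-eigenvalue argument in your last paragraph does not do this job. Distinct local eigenvalues alone do not prevent the module generated by $f^2$ from dropping rank: for example, $\Ftwoone(\tfrac16,\tfrac12;\tfrac12;z)^2=(1-z)^{-1/3}$ has local eigenvalues $1,-1$ at $0$ yet generates a rank-$1$ module.
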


\begin{proof}
The classical Clausen identity \(h=f^2\) is standard; see \cite{Clausen1828,Vidunas}. On the hyperplane \(c=a+b+\tfrac12\), the induced shifts on the \(\FthreeTwo\) parameters are exactly \(U\) and \(V\). Applying Proposition~\ref{prop:general-square-gauge} to the function \(g=f^2=h\) on this restricted sublattice gives \eqref{eq:clausen-gauge-u-new} and \eqref{eq:clausen-gauge-v-new}.
\end{proof}

\subsection{The differential component and the A036917 recurrence}

The shift generators of a CMF are not the whole story: the D-finite construction of \cite{Weinbaum} also records the differential component \(M_\theta\). The third-order ODE for \(f^2\) where \(f={}_2F_1(a,b;c;z)\) is classical, first derived by Chaundy \cite{Chaundy} and explicit in Vid\=unas \cite[Eq.~(31)]{Vidunas}. The general coefficient recurrence for \({}_2F_1(a,b;c;z)^k\) in the cases \(k=2\) and \(k=3\) has recently been obtained by Mao and Tian \cite{MaoTian}. We reformulate these classical results in the CMF framework and show that the first printed \(\pi\)-kernel arises from the differential component after passing to the square.

\begin{theorem}[CMF reformulation of the Chaundy--Vid\=unas ODE]\label{thm:theta-bridge}
Let \(f(a,b,c;z)={}_2F_1(a,b;c;z)\) and \(g=f^2\). Work away from the rank-drop locus, as in Proposition~\textup{\ref{prop:general-square-gauge}}.
\begin{enumerate}[label=\textup{(\alph*)}]
\item In the square basis
\[
B_{\mathrm{sym}}=(f^2,f\theta f,(\theta f)^2),
\]
the differential component \(\theta\) acts by
\begin{equation}\label{eq:Mtheta-sym-general}
M_{\theta,\mathrm{sym}}=
\begin{pmatrix}
0 & \dfrac{abz}{1-z} & 0\\[2mm]
2 & \dfrac{(a+b)z-c+1}{1-z} & \dfrac{2abz}{1-z}\\[2mm]
0 & 1 & \dfrac{2((a+b)z-c+1)}{1-z}
\end{pmatrix}.
\end{equation}

\item In the basis \(B_g=(g,\theta g,\theta^2 g)\), the differential component is the companion matrix
\begin{equation}\label{eq:Mtheta-g-general}
M_\theta^{(g)}=
\begin{pmatrix}
0 & 0 & \dfrac{2abz(2c-1-2(a+b)z)}{(1-z)^2}\\[2mm]
1 & 0 & \dfrac{-2(a^2+4ab+b^2)z^2 +(4ab+4ac+4bc-3a-3b-c+1)z -2(c-1)^2}{(1-z)^2}\\[2mm]
0 & 1 & \dfrac{3((a+b)z-c+1)}{1-z}
\end{pmatrix}.
\end{equation}
Equivalently, \(g\) is annihilated by the third-order Euler differential operator first derived by Chaundy \cite{Chaundy} and recorded explicitly by Vid\=unas \cite[Eq.~(31)]{Vidunas}:
\begin{equation}\label{eq:square-operator-general}
\mathcal L_{a,b,c}
=
2\theta(\theta+c-1)(\theta+2c-2)
-
z\,Q_{a,b,c}(\theta)
+
2z^2(\theta+a+b)(\theta+2a)(\theta+2b),
\end{equation}
where
\begin{equation}\label{eq:Qabc}
Q_{a,b,c}(T)=
4T^3+6(a+b+c-1)T^2
+2(4ab+4ac+4bc-3a-3b-c+1)T
+4ab(2c-1).
\end{equation}

\item If \(g(z)=\sum_{n\ge0}g_n z^n\), then the coefficients satisfy the recurrence (a special case of Mao--Tian \cite[Theorem~2.1]{MaoTian})
\begin{equation}\label{eq:general-coeff-rec}
2n(n+c-1)(n+2c-2)\,g_n
-
Q_{a,b,c}(n-1)\,g_{n-1}
+
2(n+a+b-2)(n+2a-2)(n+2b-2)\,g_{n-2}=0
\end{equation}
for \(n\ge2\).

\item In the \(\pi\)-case \((a,b,c)=(\tfrac12,\tfrac12,1)\),
\begin{equation}\label{eq:Phi-pi}
\Phi_\pi(z)=
\begin{pmatrix}
1 & 0 & \dfrac{z}{2(1-z)}\\[2mm]
0 & 2 & \dfrac{2z}{1-z}\\[2mm]
0 & 0 & 2
\end{pmatrix},
\end{equation}
and
\begin{equation}\label{eq:Mtheta-sym-pi}
M_{\theta,\mathrm{sym}}^{(\pi)}=
\begin{pmatrix}
0 & \dfrac{z}{4(1-z)} & 0\\[2mm]
2 & \dfrac{z}{1-z} & \dfrac{z}{2(1-z)}\\[2mm]
0 & 1 & \dfrac{2z}{1-z}
\end{pmatrix},
\end{equation}
\begin{equation}\label{eq:Mtheta-g-pi}
M_\theta^{(\pi,\mathrm{sq})}=
\begin{pmatrix}
0 & 0 & \dfrac{z(1-2z)}{2(1-z)^2}\\[2mm]
1 & 0 & \dfrac{z(2-3z)}{(1-z)^2}\\[2mm]
0 & 1 & \dfrac{3z}{1-z}
\end{pmatrix}.
\end{equation}
Therefore \(g(z)= {}_2F_1(\tfrac12,\tfrac12;1;z)^2\) satisfies
\begin{equation}\label{eq:square-ode-pi-left}
2\theta^3 g-z(2\theta+1)(2\theta^2+2\theta+1)g+2z^2(\theta+1)^3g=0,
\end{equation}
equivalently
\begin{equation}\label{eq:square-ode-pi-right}
2\theta^3 g-(2\theta-1)(2\theta^2-2\theta+1)z\,g+2(\theta-1)^3z^2 g=0.
\end{equation}
If \(g(z)=\sum_{n\ge0}g_n z^n\), then
\begin{equation}\label{eq:gk-rec}
2n^3g_n-(2n-1)(2n^2-2n+1)g_{n-1}+2(n-1)^3g_{n-2}=0.
\end{equation}
Finally, if \(A_n:=16^n g_n\), then \((A_n)\) satisfies \eqref{eq:A036917rec}; hence \(A_n\) is exactly the sequence of Theorem~\ref{thm:A036917kernel}.
\end{enumerate}
\end{theorem}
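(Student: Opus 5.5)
The plan is to derive everything from the Euler-form Gauss equation \eqref{eq:gauss-ode-general}, $(1-z)\theta^2 f=abz\,f+((a+b)z-c+1)\theta f$, together with the basis change $\Phi$ of Proposition~\ref{prop:general-square-gauge}. Write $p=abz/(1-z)$ and $q=((a+b)z-c+1)/(1-z)$, so that $\theta^2 f=pf+q\theta f$.

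\textbf{Part (a).} Apply $\theta$ to each element of $B_{\mathrm{sym}}$:
\[
\theta(f^2)=2f\theta f,\qquad
\theta(f\theta f)=(\theta f)^2+pf^2+qf\theta f,\qquad
\theta((\theta f)^2)=2pf\theta f+2q(\theta f)^2 .
\]
Reading these off column by column gives \eqref{eq:Mtheta-sym-general}, with the convention $\theta B=B\,M$ for row bases.

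\textbf{Part (b).} The standard gauge rule for the differential component, $M^{(g)}_\theta=\Phi^{-1}M_{\theta,\mathrm{sym}}\Phi+\Phi^{-1}\theta(\Phi)$, yields the matrix. It is more economical, though, to compute $\theta^3 g$ directly in terms of $f^2,f\theta f,(\theta f)^2$. One then inverts the triangular $\Phi$ to express the result in $(g,\theta g,\theta^2 g)$, and the last column of the companion matrix follows. The first two columns are automatic, since $\theta$ maps $g\mapsto\theta g\mapsto\theta^2 g$.

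The only delicate point is the derivative terms $\theta p$ and $\theta q$. These are
\[
\theta p=\frac{abz}{(1-z)^2},\qquad
\theta q=\frac{(a+b)z-(c-1)z}{(1-z)^2}\cdot\frac{1}{1}\ \text{(up to simplification)},
\]
and they produce the $(1-z)^{-2}$ denominators in the matrix. For the operator form, multiply the relation
\[
\theta^3 g=r_0g+r_1\theta g+r_2\theta^2 g
\]
by $(1-z)^2$ and expand in powers of $z$. This gives $P_0(\theta)-zP_1(\theta)+z^2P_2(\theta)$ with polynomial coefficients in $\theta$ (all $z$ standing to the left). Matching against \eqref{eq:square-operator-general}, one checks that the $z^0$ part is $2\theta(\theta+c-1)(\theta+2c-2)$ and similarly for the $z^1$ and $z^2$ parts. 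As a cross-check I would verify the local exponents: $0,1-c,2-2c$ at $0$ and $2a,2b,a+b$ at $\infty$, the Sym$^2$ of the Gauss exponents. Once these are fixed, only the accessory-type coefficient in $Q_{a,b,c}$ remains to be compared. I expect this bookkeeping, in particular confirming the linear coefficient $4ab+4ac+4bc-3a-3b-c+1$ and the constant $4ab(2c-1)$, to be the main obstacle. It is purely mechanical, and it can alternatively be cited from Vid\=unas \cite[Eq.~(31)]{Vidunas}.

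\textbf{Part (c).} Move the powers of $z$ to the right using $z\,\theta=(\theta-1)z$, so that $zP(\theta)=P(\theta-1)z$ and $z^2P(\theta)=P(\theta-2)z^2$. Then apply the operator to $\sum g_nz^n$ and extract $[z^n]$, using $\theta z^n=nz^n$. The $z^0$ term contributes $2n(n+c-1)(n+2c-2)g_n$, the $z^1$ term contributes $-Q_{a,b,c}(n-1)g_{n-1}$, and the $z^2$ term contributes $2(n+a+b-2)(n+2a-2)(n+2b-2)g_{n-2}$. This is exactly \eqref{eq:general-coeff-rec}.

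\textbf{Part (d).} Substitute $(a,b,c)=(\tfrac12,\tfrac12,1)$ into $\Phi$, into \eqref{eq:Mtheta-sym-general}, and into \eqref{eq:Mtheta-g-general}. For the companion entries one gets $2abz(2c-1-2(a+b)z)=\tfrac12 z(1-2z)$, and the middle numerator reduces to $z(2-3z)$ since $a^2+4ab+b^2=\tfrac32$ and the linear coefficient equals $2$. This gives \eqref{eq:Phi-pi}, \eqref{eq:Mtheta-sym-pi} and \eqref{eq:Mtheta-g-pi}.

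The operator specializes using $Q(T)=4T^3+6T^2+6T+2=(2T+1)(2T^2+2T+1)$. Dividing out the common factor gives \eqref{eq:square-ode-pi-left}, and the commutation rule above gives \eqref{eq:square-ode-pi-right}. The recurrence \eqref{eq:gk-rec} is part (c) at these parameters.

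Finally, put $g_n=16^{-n}A_n$ and multiply by $16^n/2$ to get
\[
n^3A_n=8(2n-1)(2n^2-2n+1)A_{n-1}-256(n-1)^3A_{n-2},
\]
which is \eqref{eq:A036917rec}. Since $g_0=1$ and $g_1=\tfrac14$, we have $A_0=1$ and $A_1=4$. These agree with the convolution formula of Theorem~\ref{thm:A036917kernel}, so by the recurrence the two sequences coincide for all $n$.
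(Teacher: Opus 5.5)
Your proposal follows the paper's proof essentially step for step. For (a) you use the Leibniz rule with the Euler-form Gauss equation. For (b) you use the gauge rule $M_\theta^{(g)}=\Phi^{-1}(M_{\theta,\mathrm{sym}}\Phi+\theta(\Phi))$; your direct elimination of $\theta^3 g$ is equivalent and does produce the stated last column. For (c) you use coefficient extraction, and for (d) you specialize the parameters.

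Two arithmetic slips in (d) should be fixed:
\begin{enumerate}[label=\textup{(\roman*)}]
\item The expansion is $Q_{1/2,1/2,1}(T)=4T^3+6T^2+4T+1=(2T+1)(2T^2+2T+1)$, not $4T^3+6T^2+6T+2$. Also, the specialized operator is already exactly \eqref{eq:square-ode-pi-left}, so there is no common factor to divide out.
\item You have $g_1=2\cdot\tfrac14=\tfrac12$, so $A_1=8$, not $4$. With your values the initial-value check would actually fail, since the recurrence at $n=1$ forces $A_1=8A_0$. In any case no initial-value argument is needed: $F(16x)^2=g(16x)$ gives $A_n=16^n g_n$ directly from the definitions.
\end{enumerate}
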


\begin{proof}
We proceed step by step.

For part \textup{(a)}, let
\[
b_0=f^2,\qquad b_1=f\theta f,\qquad b_2=(\theta f)^2.
\]
The Gauss equation \eqref{eq:gauss-ode-general} yields
\[
\theta b_0=\theta(f^2)=2f\theta f=2b_1.
\]
Next,
\[
\theta b_1=\theta(f\theta f)=(\theta f)^2+f\theta^2 f.
\]
Using \eqref{eq:gauss-ode-general},
\[
f\theta^2 f
=
\frac{abz}{1-z}f^2+\frac{(a+b)z-c+1}{1-z}f\theta f
=
\frac{abz}{1-z}b_0+\frac{(a+b)z-c+1}{1-z}b_1,
\]
hence
\[
\theta b_1=\frac{abz}{1-z}b_0+\frac{(a+b)z-c+1}{1-z}b_1+b_2.
\]
Finally,
\[
\theta b_2=\theta\bigl((\theta f)^2\bigr)=2\theta f\,\theta^2 f
=
\frac{2abz}{1-z}f\theta f+\frac{2((a+b)z-c+1)}{1-z}(\theta f)^2
=
\frac{2abz}{1-z}b_1+\frac{2((a+b)z-c+1)}{1-z}b_2.
\]
These three identities are exactly the column equations encoded by \eqref{eq:Mtheta-sym-general}.

For part \textup{(b)}, we use the general gauge matrix \(\Phi\) from Proposition~\ref{prop:general-square-gauge}. Since
\[
B_g=B_{\mathrm{sym}}\Phi,
\]
differentiating gives
\[
\theta(B_g)=\theta(B_{\mathrm{sym}})\Phi+B_{\mathrm{sym}}\theta(\Phi)
=
B_{\mathrm{sym}}\bigl(M_{\theta,\mathrm{sym}}\Phi+\theta(\Phi)\bigr).
\]
By definition of \(M_\theta^{(g)}\),
\[
\theta(B_g)=B_gM_\theta^{(g)}=B_{\mathrm{sym}}\Phi\,M_\theta^{(g)}.
\]
Cancelling the basis \(B_{\mathrm{sym}}\) yields the differential gauge formula
\[
M_\theta^{(g)}=\Phi^{-1}\bigl(M_{\theta,\mathrm{sym}}\Phi+\theta(\Phi)\bigr).
\]
Substituting \eqref{eq:Mtheta-sym-general} and \eqref{eq:Phi-general} and simplifying gives \eqref{eq:Mtheta-g-general}. Because this is a companion matrix, the corresponding differential equation is
\[
\theta^3 g-m_2\theta^2 g-m_1\theta g-m_0 g=0,
\]
where \(m_0,m_1,m_2\) are the three entries in the last column of \eqref{eq:Mtheta-g-general}. Multiplying by \((1-z)^2\) and collecting the coefficients of \(1\), \(z\), and \(z^2\) gives
\[
2(1-z)^2\bigl(\theta^3-m_2\theta^2-m_1\theta-m_0\bigr)
=
2\theta(\theta+c-1)(\theta+2c-2)-zQ_{a,b,c}(\theta)+2z^2(\theta+a+b)(\theta+2a)(\theta+2b),
\]
with \(Q_{a,b,c}\) as in \eqref{eq:Qabc}. This is \eqref{eq:square-operator-general}.

For part \textup{(c)}, write
\[
g(z)=\sum_{n\ge0}g_n z^n.
\]
For any polynomial \(P\) in \(\theta\),
\[
[z^n]\bigl(P(\theta)g\bigr)=P(n)g_n,
\]
because \(\theta^k z^n=n^k z^n\). Likewise,
\[
[z^n]\bigl(zP(\theta)g\bigr)=P(n-1)g_{n-1},
\qquad
[z^n]\bigl(z^2P(\theta)g\bigr)=P(n-2)g_{n-2}.
\]
Extracting the coefficient of \(z^n\) from \eqref{eq:square-operator-general} therefore yields \eqref{eq:general-coeff-rec}.

For part \textup{(d)}, substitute \(a=b=\tfrac12\), \(c=1\) into \eqref{eq:Phi-general} and \eqref{eq:Mtheta-sym-general}; this gives \eqref{eq:Phi-pi} and \eqref{eq:Mtheta-sym-pi}. Substituting the same values into \eqref{eq:Mtheta-g-general} gives \eqref{eq:Mtheta-g-pi}. Reading off the corresponding companion differential equation yields
\[
\theta^3 g-\frac{3z}{1-z}\theta^2g-\frac{z(2-3z)}{(1-z)^2}\theta g-\frac{z(1-2z)}{2(1-z)^2}g=0.
\]
Multiplying by \(2(1-z)^2\) gives
\[
2(1-z)^2\theta^3g-6z(1-z)\theta^2 g-2z(2-3z)\theta g-z(1-2z)g=0.
\]
Expanding the right-hand side of \eqref{eq:square-ode-pi-left},
\[
2\theta^3-z(2\theta+1)(2\theta^2+2\theta+1)+2z^2(\theta+1)^3,
\]
produces exactly the same operator:
\[
2\theta^3-4z\theta^3-6z\theta^2-4z\theta-z+2z^2\theta^3+6z^2\theta^2+6z^2\theta+2z^2.
\]
Hence \eqref{eq:square-ode-pi-left} holds. The right-shifted form \eqref{eq:square-ode-pi-right} follows from the Euler commutation rule
\[
\theta z = z(\theta+1),
\]
equivalently \(zP(\theta)=P(\theta-1)z\), applied twice.

Finally, coefficient extraction from \eqref{eq:square-ode-pi-left} gives
\[
2n^3g_n-(2(n-1)+1)\bigl(2(n-1)^2+2(n-1)+1\bigr)g_{n-1}+2(n-1)^3 g_{n-2}=0,
\]
which simplifies to \eqref{eq:gk-rec}. If \(A_n=16^n g_n\), then multiplying \eqref{eq:gk-rec} by \(16^n\) and dividing by \(2\) yields
\[
n^3A_n=8(2n-1)(2n^2-2n+1)A_{n-1}-256(n-1)^3A_{n-2},
\]
namely \eqref{eq:A036917rec}. This is exactly the standard recurrence of \(A036917\).
\end{proof}

\begin{corollary}[the first printed order-$3$ \(\pi\)-formula from the ambient Gauss CMF]\label{cor:first-pi-chain}
Let \(f(z)=\Ftwoone(\tfrac12,\tfrac12;1;z)\). The first printed order-$3$ \(\pi\)-formula in Appendix~B.6 of \cite{Raz} is obtained from the \emph{ambient} Gauss $\Ftwoone$-CMF (whose evaluation-$z=\tfrac12$ slice is the published rank-$2$ $\pi$-CMF of \cite{Raz}) by the following operations:
\[
\text{square } f\ \longmapsto\ \text{differential component }M_\theta\ \longmapsto\ \text{coefficient extraction}\ \longmapsto\ \text{summation}.
\]
\end{corollary}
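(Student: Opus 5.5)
The corollary follows by chaining results already proved, so the proof is a matter of assembling them in order and checking that each arrow is the operation it claims to be.

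\emph{Step 1 (squaring).} Start from the ambient Gauss CMF with basis $B_f=(f,\theta f)$ at $(a,b,c)=(\tfrac12,\tfrac12,1)$. Its differential component is read off from \eqref{eq:gauss-ode-general}. Passing to the square uses Proposition~\ref{prop:general-square-gauge} and Theorem~\ref{thm:theta-bridge}(a),(b). The differential component in the basis $B_{\mathrm{sym}}$ is \eqref{eq:Mtheta-sym-pi}. After the gauge by $\Phi_\pi$ from \eqref{eq:Phi-pi}, it becomes the companion matrix \eqref{eq:Mtheta-g-pi} in the basis $B_g$. This is the arrow ``square $f\mapsto$ differential component $M_\theta$''. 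In the write-up I would also note, as the statement asserts, that specializing the evaluation variable to $z=\tfrac12$ recovers the published rank-$2$ $\pi$-CMF of \cite{Raz}. This is a citation-level remark and not part of the logical chain: the chain only uses the ambient CMF.

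\emph{Step 2 (coefficient extraction).} The companion matrix \eqref{eq:Mtheta-g-pi} is equivalent to the Euler operator \eqref{eq:square-ode-pi-left}. Coefficient extraction, via $[z^n]z^kP(\theta)g=P(n-k)g_{n-k}$, gives \eqref{eq:gk-rec}. Setting $A_n=16^ng_n$ gives the A036917 recurrence \eqref{eq:A036917rec}. All of this is Theorem~\ref{thm:theta-bridge}(d).

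\emph{Step 3 (rescaling to the kernel).} Theorem~\ref{thm:A036917kernel}(c) shows that $c_n=nA_n/32^n=n g_n/2^n$ is annihilated by $L^{(1)}_2$ from \eqref{eq:L2firstkernel}. I would point out that this rescaling is the hypergeometric-type twist relating the coefficient sequence to the kernel. It is implicit in the ``coefficient extraction'' arrow, and I would state it explicitly so the chain is honest.

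\emph{Step 4 (summation).} Theorem~\ref{thm:three-printed-factor}(i) gives $L^{(\pi,1)}_3=\tfrac14\sigma\bigl(L^{(1)}_2(S-1)\bigr)$. By Remark~\ref{rem:summation-lift}, the partial sums $s_n=\sum_{k\le n}c_k$ satisfy $L^{(1)}_2(S-1)s=0$, and hence the shifted version annihilates the shifted partial sums. This is the printed order-$3$ recurrence.

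\emph{Main obstacle.} The delicate point is matching at the level of the actual formula, not just the operators. The printed recurrence concerns the partial sums of a specific $\pi$-formula, and an operator identity only shows that this formula's sequence lies in the $3$-dimensional solution space. So I would check that the printed formula's summand is exactly $\kappa\, n A_n/32^n$ for some constant $\kappa$. Since $L^{(1)}_2$ has order $2$, this requires matching two consecutive initial terms, with care at $n=0$ because $c_0=0$ and the factor $n$ degenerates there. The index shift $\sigma$ also has to be tracked so that the summation starts where the printed formula does. I would first try comparing $c_1,c_2$ with the printed summand. If the printed formula uses a different normalization, I would absorb it into $\kappa$, as was done for the Catalan case in Theorem~\ref{thm:catalan-square}.
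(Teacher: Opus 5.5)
Your proposal is correct and follows the paper's proof. The paper chains Theorem~\ref{thm:theta-bridge}(d), Theorem~\ref{thm:A036917kernel}(c) and Theorem~\ref{thm:three-printed-factor}(i) purely at the operator level, so your initial-value matching against the printed summand is an extra check that the paper does not perform.

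There is one small index slip in Step~4. With $s_n=\sum_{k\le n}c_k$ you have $(S-1)s=(c_{n+1})_n$, and this sequence is annihilated by $\sigma(L^{(1)}_2)$, not by $L^{(1)}_2$. Indeed, with $c_n=nA_n/32^n$ one gets $(L^{(1)}_2(S-1)s)_0=c_1=\tfrac14\neq0$. The correct statement is therefore directly $L^{(\pi,1)}_3 s=\tfrac14\,\sigma(L^{(1)}_2)(S-1)s=0$, while $L^{(1)}_2(S-1)$ annihilates the sums $\sum_{k<n}c_k$.
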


\begin{proof}
Theorem~\ref{thm:theta-bridge}(d) identifies the coefficient recurrence of \(g(z)=f(z)^2\) with the A036917 recurrence. Theorem~\ref{thm:A036917kernel}(c) transforms that coefficient recurrence into the kernel \(L_2^{(1)}\) of the first printed \(\pi\)-formula. Theorem~\ref{thm:three-printed-factor}(i) then lifts \(L_2^{(1)}\) to the printed order-$3$ formula recurrence by one summation.
\end{proof}

\begin{remark}[nonlinear pullback versus differential extraction]\label{rem:pullback-obstruction}
The pure Clausen corollary is a \emph{parameter} statement: it compares CMFs attached to the same evaluation variable \(z\). The \(A036917\) identity from Theorem~\ref{thm:A036917kernel}(b) involves instead the nonlinear pullback
\[
z\mapsto 64x(1-16x)
\]
in the evaluation variable. Theorem~\ref{thm:theta-bridge} bypasses this issue by extracting coefficients from the differential component \(M_\theta\) of the ambient square CMF. Theorem~\ref{thm:pullback-cmf} below provides a rational pullback--twist transport theorem; what remains open is a fully general theory encompassing algebraic pullbacks in the mixed shift-differential setting.
\end{remark}

\subsection{Inverse classification via the accessory parameter}

For a generic Riemann scheme of $\Sym^2$-type, the local exponents alone do not determine the differential operator: one accessory parameter remains free. The following result shows that the $\Sym^2(\text{Gauss})$ locus is cut out by a single explicit equation.

\begin{theorem}[inverse classification]\label{thm:inverse}
Let $\alpha,\beta,\gamma_1,\gamma_2$ satisfy $\alpha+\beta+\gamma_1+\gamma_2=1$. The theorem is first proved on the generic locus where $\alpha,2\alpha,\beta,2\beta,\gamma_1-\gamma_2\notin\mathbb Z\setminus\{0\}$ and the operator has genuine order~$3$; since all formulas are rational in the parameters, they extend by specialization to all non-degenerate points, including the $A036917$ case $(\alpha,\beta,\gamma_1,\gamma_2)=(0,0,\tfrac12,\tfrac12)$ and the Catalan case $(\alpha,\beta,\gamma_1,\gamma_2)=(-\tfrac12,0,\tfrac12,1)$ of Remark~\textup{\ref{rem:inverse-check}}. Consider the $\Sym^2$-type Riemann scheme
\[
z=0:\ \{0,\alpha,2\alpha\},\qquad
z=1:\ \{0,\beta,2\beta\},\qquad
z=\infty:\ \{\gamma_1+\gamma_2,\,2\gamma_1,\,2\gamma_2\}.
\]
\begin{enumerate}[label=\textup{(\alph*)}]
\item There exists a one-parameter family $L_\lambda$ of monic order-$3$ Fuchsian operators on $\mathbb P^1\setminus\{0,1,\infty\}$ with this Riemann scheme, parametrized by an accessory parameter $\lambda$. In Euler form,
\begin{equation}\label{eq:L-lambda}
\widetilde L_\lambda
=
\theta(\theta-\alpha)(\theta-2\alpha)
+z\bigl(P_1(\theta)-\lambda\bigr)
+z^2(\theta+\gamma_1+\gamma_2)(\theta+2\gamma_1)(\theta+2\gamma_2),
\end{equation}
where $P_1(\theta)=-2\theta^3+3(\alpha-\gamma_1-\gamma_2)\theta^2+(4\alpha(\gamma_1+\gamma_2)-4\gamma_1\gamma_2-\alpha-\gamma_1-\gamma_2)\theta$.

\item The operator $L_\lambda$ is the symmetric square of a Gauss hypergeometric equation if and only if
\begin{equation}\label{eq:lambda0}
\boxed{\lambda=\lambda_0:=2\gamma_1\gamma_2(1-2\alpha).}
\end{equation}

\item At $\lambda=\lambda_0$, the recovered Gauss parameters are
\begin{equation}\label{eq:recovered-params}
a=\gamma_1,\qquad b=\gamma_2,\qquad c=1-\alpha,
\end{equation}
and $L_{\lambda_0}=\Sym^2\bigl(\textup{Gauss}(a,b;c)\bigr)$.
\end{enumerate}
\end{theorem}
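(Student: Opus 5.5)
The plan is to get everything from the Chaundy--Vid\=unas operator of Theorem~\ref{thm:theta-bridge}(b), specialized along the dictionary \eqref{eq:recovered-params}, and then treat the accessory parameter as the only remaining freedom. Put $a=\gamma_1$, $b=\gamma_2$, $c=1-\alpha$. Then the constraint $\alpha+\beta+\gamma_1+\gamma_2=1$ gives $c-a-b=\beta$.

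\textbf{Step 1: matching the outer parts.} In $\tfrac12\mathcal L_{a,b,c}$ from \eqref{eq:square-operator-general}, the $z^0$-part is $\theta(\theta+c-1)(\theta+2c-2)=\theta(\theta-\alpha)(\theta-2\alpha)$. The $z^2$-part is $(\theta+\gamma_1+\gamma_2)(\theta+2\gamma_1)(\theta+2\gamma_2)$. These are exactly the outer terms of \eqref{eq:L-lambda}.

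\textbf{Step 2: matching the middle part.} The middle part is $-\tfrac12 Q_{a,b,c}(\theta)$. Reading off \eqref{eq:Qabc}:
\begin{itemize}[label=--]
\item the cubic coefficient is $-2$;
\item the quadratic coefficient is $-3(a+b+c-1)=3(\alpha-\gamma_1-\gamma_2)$;
\item the linear coefficient is $-(4ab+4ac+4bc-3a-3b-c+1)$, which after substitution becomes $4\alpha(\gamma_1+\gamma_2)-4\gamma_1\gamma_2-\alpha-\gamma_1-\gamma_2$;
\item the constant term is $-2ab(2c-1)=-2\gamma_1\gamma_2(1-2\alpha)$.
\end{itemize}
So $\tfrac12\mathcal L_{\gamma_1,\gamma_2,1-\alpha}=\widetilde L_{\lambda_0}$. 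This shows that $L_{\lambda_0}$ is $\Sym^2(\mathrm{Gauss}(a,b;c))$ with the parameters \eqref{eq:recovered-params}, which gives the ``if'' direction of (b) and all of (c).

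\textbf{Step 3: the Riemann scheme for every $\lambda$ (part (a)).} Observe that $\widetilde L_\lambda-\widetilde L_{\lambda_0}=-(\lambda-\lambda_0)z$ is an order-$0$ operator.
\begin{itemize}[label=--]
\item At $z=0$ and $z=\infty$, the indicial polynomials of $\widetilde L_\lambda$ are the $z^0$- and $z^2$-parts. These do not involve $\lambda$ and give $\{0,\alpha,2\alpha\}$ and $\{\gamma_1+\gamma_2,2\gamma_1,2\gamma_2\}$.
\item At $z=1$, the leading coefficient of $\widetilde L_\lambda$ in $\theta^3$ is $1-2z+z^2=(1-z)^2$. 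Rewriting in $\partial_z$, the order-$0$ term carries a pole of order at most $2<3$ at $z=1$ relative to the leading term. Hence it does not enter the indicial equation, and the exponents at $1$ are those of $\Sym^2$ of the Gauss equation, namely $\{0,\beta,2\beta\}$ since $c-a-b=\beta$.
\end{itemize}
I will also argue that every such Euler operator is of the form \eqref{eq:L-lambda}, so that the family is exactly one-dimensional. The outer cubics are fixed by the exponents at $0$ and $\infty$, and the cubic coefficient of $P_1$ must be $-2$ to produce the double zero $(1-z)^2$ required for order $3$ at $z=1$. Prescribing the exponents at $1$ then fixes two more coefficients, the third being automatic by Fuchs' relation. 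This leaves only the constant term, which is $\lambda$.

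\textbf{Step 4: the ``only if'' direction of (b).} Suppose $L_\lambda=\Sym^2$ of some Gauss equation. The square's exponents at a point are $\{2e_1,e_1+e_2,2e_2\}$, where $e_1,e_2$ are the Gauss exponents there. On the generic locus ($\alpha,2\alpha,\gamma_1-\gamma_2\notin\Z\setminus\{0\}$):
\begin{itemize}[label=--]
\item matching $\{0,\alpha,2\alpha\}$ forces the Gauss exponents at $0$ to be $\{0,\alpha\}$, so $c=1-\alpha$;
\item matching at $\infty$ forces $\{a,b\}=\{\gamma_1,\gamma_2\}$, and the operator is symmetric in $a,b$.
\end{itemize}
Hence the Gauss equation is the one from Step~1, its square is $L_{\lambda_0}$, and $\lambda=\lambda_0$ because distinct $\lambda$ give distinct operators. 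The non-generic points, including the $A036917$ and Catalan cases, follow by specialization of these rational identities, as the statement indicates.

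The main obstacle is Step~4. One has to rule out the degenerate exponent assignments, for example Gauss exponents $\{-\alpha,0\}$ at $0$, which square to $\{-2\alpha,-\alpha,0\}$. This is where the genericity hypotheses are used, and I would first try to dispose of it by directly comparing these exponent sets.
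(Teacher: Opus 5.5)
Your proposal is correct, but it takes a different route from the paper.

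The paper's route:
- For (a), it works from scratch in the $D=d/dz$ normal form. It writes the general monic Fuchsian operator $D^3+AD^2+BD+C$ in partial fractions and imposes the indicial equations at $0,1,\infty$. The zero exponents at $0$ and $1$ force $C$ to have at most double poles, which leaves $s,t$ with one linear condition at $\infty$. It then sets $\lambda=s$, and only afterwards converts to Euler form.
- For (b), it computes $\Sym^2(D^2+pD+q)=D^3+3pD^2+(2p^2+p'+4q)D+(4pq+2q')$ for the Gauss $p,q$ forced by the exponents. It then finds $C_\lambda-(4pq+2q')=-(\lambda-\lambda_0)/(z^2(z-1)^2)$.

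Your route recognizes $\widetilde L_{\lambda_0}$ as $\tfrac12\mathcal L_{\gamma_1,\gamma_2,1-\alpha}$ from Theorem~\ref{thm:theta-bridge}. Your coefficient matching is right, including the constant term $-2\gamma_1\gamma_2(1-2\alpha)$. Your identity $\widetilde L_\lambda-\widetilde L_{\lambda_0}=-(\lambda-\lambda_0)z$ is exactly the paper's difference term multiplied by $z^3(z-1)^2$.

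What each buys: your route is shorter and reuses an established result instead of redoing the $\Sym^2$ computation. It also shows conceptually why $\lambda$ is invisible to all three indicial equations. The paper's route is self-contained. Its normal form is also what justifies the completeness claim you only sketch: that every monic Fuchsian operator with this scheme has a $z$-degree-$2$ Euler form at all. Your coefficient count presupposes this, and it relies on $0$ being an exponent at both $0$ and $1$.

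Two small remarks:
- The ``main obstacle'' you flag in Step~4 is not one. The multiset $\{2e_1,e_1+e_2,2e_2\}$ has sum $3(e_1+e_2)$, so equality with $\{0,\alpha,2\alpha\}$ forces $e_1+e_2=\alpha$ and then $\{e_1,e_2\}=\{0,\alpha\}$. No genericity is needed; your example $\{-2\alpha,-\alpha,0\}$ has sum $-3\alpha$.
- To call $\tfrac12\mathcal L$ the symmetric square rather than just an annihilator of $f^2$, note that the Leibniz computation behind Theorem~\ref{thm:theta-bridge} uses only the Gauss equation. So it kills $y^2$ for every solution $y$ and, by polarization, every product $y_1y_2$.
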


\begin{proof}
We work on the Zariski-open set where the operator has genuine order \(3\) and all exponent differences are nonintegral; every identity obtained below is rational in the parameters and therefore extends by specialization to all non-degenerate points.

\smallskip
\noindent\textbf{Part \textup{(a)}.}
A monic third-order Fuchsian operator with singularities only at \(0,1,\infty\) and with one zero exponent at both \(0\) and \(1\) can be written in the form
\[
L=D^3+A(z)D^2+B(z)D+C(z),
\]
with
\[
A(z)=\frac{u_0}{z}+\frac{u_1}{z-1},\qquad
B(z)=\frac{v_0}{z^2}+\frac{v_1}{(z-1)^2}+\frac{v}{z(z-1)},
\]
\[
C(z)=\frac{s}{z^2(z-1)}+\frac{t}{z(z-1)^2}.
\]
The indicial polynomial at \(z=0\) is obtained by substituting \(y=z^r\):
\[
I_0(r)=r\bigl((r-1)(r-2)+u_0(r-1)+v_0\bigr).
\]
Requiring \(I_0(r)=r(r-\alpha)(r-2\alpha)\) gives
\[
u_0=3-3\alpha,\qquad v_0=(1-\alpha)(1-2\alpha).
\]
Likewise, substituting \(y=(z-1)^r\) gives
\[
I_1(r)=r\bigl((r-1)(r-2)+u_1(r-1)+v_1\bigr),
\]
and the condition \(I_1(r)=r(r-\beta)(r-2\beta)\) yields
\[
u_1=3-3\beta,\qquad v_1=(1-\beta)(1-2\beta).
\]

At \(z=\infty\), substituting \(y=z^{-r}\) shows that the coefficient of \(z^{-r-3}\) in \(L(y)\) is
\[
I_\infty(r)=-r(r+1)(r+2)+(u_0+u_1)r(r+1)-(v_0+v_1+v)r+(s+t).
\]
We require
\[
I_\infty(r)=-(r-\gamma_1-\gamma_2)(r-2\gamma_1)(r-2\gamma_2).
\]
Matching the coefficients of \(r\) and the constant term, and using the Fuchs relation
\[
\alpha+\beta+\gamma_1+\gamma_2=1,
\]
gives
\[
v=4(\alpha\beta+\gamma_1\gamma_2+\gamma_1+\gamma_2),
\qquad
s+t=4\gamma_1\gamma_2(\gamma_1+\gamma_2).
\]
Hence exactly one free parameter remains. We write
\[
s=\lambda,\qquad t=4\gamma_1\gamma_2(\gamma_1+\gamma_2)-\lambda.
\]

Now multiply the operator \(L\) by \(z^3(z-1)^2\) and use \(zD=\theta\). Evaluating on a monomial \(z^n\) gives
\[
z^3(z-1)^2L(z^n)=z^n\bigl[P_0(n)+z\bigl(P_1(n)-\lambda\bigr)+z^2P_2(n)\bigr],
\]
where
\[
P_0(n)=n(n-\alpha)(n-2\alpha),
\]
\[
P_1(n)=-2n^3+3(\alpha-\gamma_1-\gamma_2)n^2
+\bigl(4\alpha(\gamma_1+\gamma_2)-4\gamma_1\gamma_2-\alpha-\gamma_1-\gamma_2\bigr)n,
\]
\[
P_2(n)=(n+\gamma_1+\gamma_2)(n+2\gamma_1)(n+2\gamma_2).
\]
Since this identity holds for every \(n\), the corresponding Euler-form operator is precisely
\[
\widetilde L_\lambda
=
\theta(\theta-\alpha)(\theta-2\alpha)
+z\bigl(P_1(\theta)-\lambda\bigr)
+z^2(\theta+\gamma_1+\gamma_2)(\theta+2\gamma_1)(\theta+2\gamma_2),
\]
which is \eqref{eq:L-lambda}.

\smallskip
\noindent\textbf{Part \textup{(b)}.}
Assume now that \(L_\lambda\) is the symmetric square of a second-order operator
\[
M=D^2+p(z)D+q(z).
\]
A standard calculation gives
\[
\Sym^2(M)=D^3+3p\,D^2+\bigl(2p^2+p'+4q\bigr)D+\bigl(4pq+2q'\bigr).
\]
Because the local exponent sets of \(M\) must be \(\{0,\alpha\}\) at \(0\), \(\{0,\beta\}\) at \(1\), and \(\{\gamma_1,\gamma_2\}\) at \(\infty\), the coefficients of \(M\) are forced to be
\[
p(z)=\frac{1-\alpha}{z}+\frac{1-\beta}{z-1},
\qquad
q(z)=\frac{\gamma_1\gamma_2}{z(z-1)}.
\]
Indeed, these are exactly the partial fractions of the monic Gauss operator with parameters \(a=\gamma_1\), \(b=\gamma_2\), \(c=1-\alpha\).

Substituting \(p\) and \(q\) into the symmetric-square formulas gives
\[
3p(z)=\frac{3-3\alpha}{z}+\frac{3-3\beta}{z-1}=A(z),
\]
and
\[
2p(z)^2+p'(z)+4q(z)
=
\frac{(1-\alpha)(1-2\alpha)}{z^2}
+\frac{(1-\beta)(1-2\beta)}{(z-1)^2}
+\frac{4(\alpha\beta+\gamma_1\gamma_2+\gamma_1+\gamma_2)}{z(z-1)}
=
B(z).
\]
So the \(D^2\)- and \(D\)-coefficients already agree with the family from part \textup{(a)}.

For the constant term we compute
\[
4p(z)q(z)+2q'(z)
=
\frac{2\gamma_1\gamma_2(2\alpha-1)}{z^2}
+\frac{4\gamma_1\gamma_2(\alpha-\beta)}{z}
-\frac{2\gamma_1\gamma_2(2\beta-1)}{(z-1)^2}
-\frac{4\gamma_1\gamma_2(\alpha-\beta)}{z-1}.
\]
On the other hand, the partial fraction decomposition of
\[
C_\lambda(z)=\frac{\lambda}{z^2(z-1)}+\frac{4\gamma_1\gamma_2(\gamma_1+\gamma_2)-\lambda}{z(z-1)^2}
\]
is
\[
C_\lambda(z)
=
-\frac{\lambda}{z^2}
+\frac{4\gamma_1\gamma_2(\gamma_1+\gamma_2)-2\lambda}{z}
+\frac{4\gamma_1\gamma_2(\gamma_1+\gamma_2)-\lambda}{(z-1)^2}
-\frac{4\gamma_1\gamma_2(\gamma_1+\gamma_2)-2\lambda}{z-1}.
\]
Using \(\beta=1-\alpha-\gamma_1-\gamma_2\), the difference simplifies to
\[
C_\lambda-(4pq+2q')
=
-\frac{\lambda+4\alpha\gamma_1\gamma_2-2\gamma_1\gamma_2}{z^2(z-1)^2}.
\]
Therefore \(C_\lambda=4pq+2q'\) if and only if
\[
\lambda=2\gamma_1\gamma_2(1-2\alpha),
\]
which is exactly \eqref{eq:lambda0}. This proves the necessity of \eqref{eq:lambda0}. Conversely, when \(\lambda=\lambda_0\), the coefficients \(A,B,C\) of \(L_\lambda\) agree term-by-term with those of \(\Sym^2(M)\), hence \(L_{\lambda_0}=\Sym^2(M)\).

\smallskip
\noindent\textbf{Part \textup{(c)}.}
With \(\lambda=\lambda_0\), the recovered second-order operator is
\[
M=D^2+\left(\frac{1-\alpha}{z}+\frac{1-\beta}{z-1}\right)D+\frac{\gamma_1\gamma_2}{z(z-1)}.
\]
Set
\[
a=\gamma_1,\qquad b=\gamma_2,\qquad c=1-\alpha.
\]
Then
\[
c-a-b=1-\alpha-\gamma_1-\gamma_2=\beta,
\qquad
ab=\gamma_1\gamma_2,
\]
so \(M\) is the monic Gauss hypergeometric operator for \({}_2F_1(a,b;c;z)\). Therefore
\[
L_{\lambda_0}=\Sym^2(\mathrm{Gauss}(a,b;c)),
\]
which proves \eqref{eq:recovered-params}.
\end{proof}

\begin{remark}[verification on the printed kernels]\label{rem:inverse-check}
For the $A036917$ case $(\alpha,\beta,\gamma_1,\gamma_2)=(0,0,\tfrac12,\tfrac12)$, formula \eqref{eq:lambda0} gives $\lambda_0=\tfrac12$. The Frobenius recurrence of $\widetilde L_{1/2}$ is
\[
2n^3g_n-(2n-1)(2n^2-2n+1)g_{n-1}+2(n-1)^3g_{n-2}=0,
\]
which is exactly the $A036917$ recurrence.

For the Catalan case $(\alpha,\beta,\gamma_1,\gamma_2)=(-\tfrac12,0,\tfrac12,1)$, formula \eqref{eq:lambda0} gives $\lambda_0=2$. The Frobenius recurrence of $\widetilde L_2$ reproduces the Catalan-kernel Gauss-square recurrence at $(a,b,c)=(\tfrac12,1,\tfrac32)$.
\end{remark}

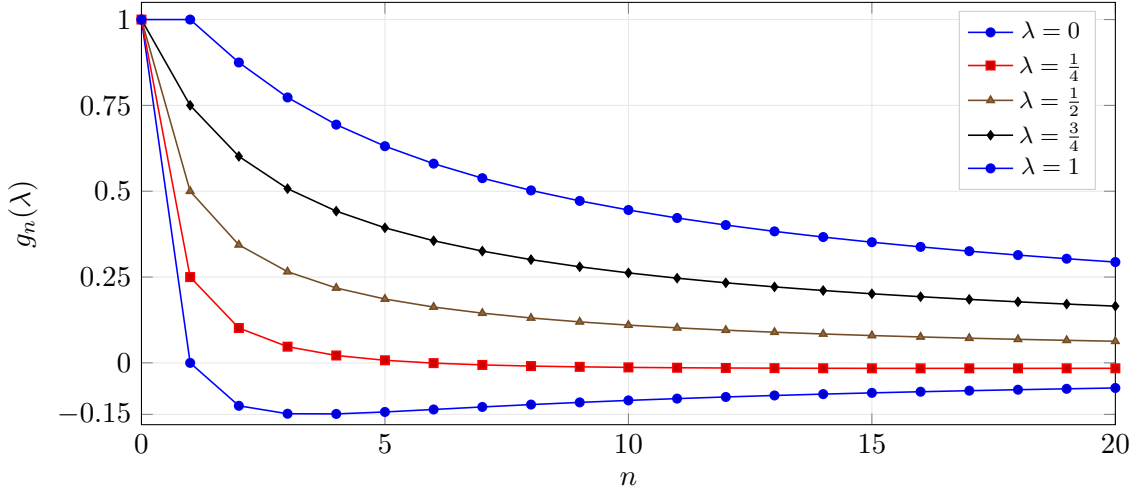
\begin{figure}[t]
\centering
\begin{tikzpicture}
\begin{axis}[
    width=.88\textwidth,
    height=7.2cm,
    xlabel={$n$},
    ylabel={$g_n(\lambda)$},
    xmin=0, xmax=20,
    ymin=-0.18, ymax=1.05,
    xtick={0,5,10,15,20},
    ytick={-0.15,0,0.25,0.5,0.75,1.0},
    grid=both,
    major grid style={gray!18},
    minor grid style={gray!10},
    legend style={at={(0.98,0.98)},anchor=north east,fill=white,draw=gray!40,font=\footnotesize},
    every axis plot/.append style={semithick, mark size=1.6pt},
]
\addplot+[mark=*] coordinates {(0,1.0000000000) (1,0.0000000000) (2,-0.1250000000) (3,-0.1481481481) (4,-0.1486545139) (5,-0.1429675926) (6,-0.1357052282) (7,-0.1283623497) (8,-0.1214375331) (9,-0.1150747842) (10,-0.1092855925) (11,-0.1040329003) (12,-0.0992645391) (13,-0.0949267298) (14,-0.0909693168) (15,-0.0873474854) (16,-0.0840219871) (17,-0.0809587641) (18,-0.0781283677) (19,-0.0755053452) (20,-0.0730676679)};
\addlegendentry{$\lambda=0$}
\addplot+[mark=square*] coordinates {(0,1.0000000000) (1,0.2500000000) (2,0.1015625000) (3,0.0472366898) (4,0.0215502138) (5,0.0075798300) (6,-0.0007066607) (7,-0.0059110690) (8,-0.0093081418) (9,-0.0115822913) (10,-0.0131272190) (11,-0.0141819223) (12,-0.0148978786) (13,-0.0153745397) (14,-0.0156791405) (15,-0.0158582686) (16,-0.0159448871) (17,-0.0159627403) (18,-0.0159291968) (19,-0.0158571311) (20,-0.0157561965)};
\addlegendentry{$\lambda=\tfrac14$}
\addplot+[mark=triangle*] coordinates {(0,1.0000000000) (1,0.5000000000) (2,0.3437500000) (3,0.2656250000) (4,0.2181396484) (5,0.1859741211) (6,0.1626243591) (7,0.1448383331) (8,0.1308014300) (9,0.1194177447) (10,0.1099845695) (11,0.1020296386) (12,0.0952232995) (13,0.0893281492) (14,0.0841686830) (15,0.0796122323) (16,0.0755565656) (17,0.0719215780) (18,0.0686435736) (19,0.0656712412) (20,0.0629627641)};
\addlegendentry{$\lambda=\tfrac12$}
\addplot+[mark=diamond*] coordinates {(0,1.0000000000) (1,0.7500000000) (2,0.6015625000) (3,0.5074508102) (4,0.4419784546) (5,0.3934293411) (6,0.3557712706) (7,0.3255738546) (8,0.3007337684) (9,0.2798844163) (10,0.2620963077) (11,0.2467134409) (12,0.2332585093) (13,0.2213752418) (14,0.2107918742) (15,0.2012971943) (16,0.1927243605) (17,0.1849396876) (18,0.1778346966) (19,0.1713203665) (20,0.1653229030)};
\addlegendentry{$\lambda=\tfrac34$}
\addplot+[mark=otimes*] coordinates {(0,1.0000000000) (1,1.0000000000) (2,0.8750000000) (3,0.7731481481) (4,0.6939380787) (5,0.6311765046) (6,0.5802455820) (7,0.5380223961) (8,0.5023803853) (9,0.4718347712) (10,0.4453205057) (11,0.4220541866) (12,0.4014465092) (13,0.3830453144) (14,0.3664975689) (15,0.3515233526) (16,0.3378976617) (17,0.3254374148) (18,0.3139920052) (19,0.3034363142) (20,0.2936654673)};
\addlegendentry{$\lambda=1$}
\end{axis}
\end{tikzpicture}
\caption{The one-parameter deformation in Theorem~\ref{thm:inverse} for the $A036917$ Riemann scheme \((\alpha,\beta,\gamma_1,\gamma_2)=(0,0,\tfrac12,\tfrac12)\). The coefficients \(g_n(\lambda)\) are generated by the Frobenius recurrence of \(\widetilde L_\lambda\) with \(g_0=1\) and \(g_1=\lambda\). The special value \(\lambda_0=\tfrac12\) is the unique \(\Sym^2(\mathrm{Gauss})\) point; after the rescaling \(A_n=16^n g_n(\tfrac12)\), one recovers \(A036917\).}
\label{fig:lambda-deformation}
\end{figure}

\subsection{Belyi pullback closure for the Domb kernel}

The Domb kernel $A002895$ is not a direct coefficient sequence of any ${}_2F_1(a,b;c;z)^2$, since its generating-function ODE has four singularities rather than three. However, the Domb generating function admits a classical representation as a \emph{pulled-back twisted Gauss square}. We now show that this representation closes the CMF chain for the second printed $\pi$-kernel.

\begin{theorem}[Domb closure via Belyi pullback]\label{thm:domb-pullback}
Let
\[
f(z)={}_2F_1\!\left(\tfrac16,\tfrac13;1;z\right),
\qquad
\phi(x)=\frac{108x^2}{(1-4x)^3},
\qquad
y(x)=\frac{1}{\sqrt{1-4x}}\,f\bigl(\phi(x)\bigr).
\]
Then:
\begin{enumerate}[label=\textup{(\alph*)}]
\item The map $\phi\colon\mathbb P^1\to\mathbb P^1$ is a Belyi map of degree $3$, ramified only over $\{0,1,\infty\}$, with ramification passport $[2{+}1,\, 2{+}1,\, 3]$.

\item The function $y(x)$ satisfies the Heun equation
\begin{equation}\label{eq:heun-domb}
x(1{-}4x)(1{-}16x)\,y''+(1{-}30x+128x^2)\,y'-2(1{-}8x)\,y=0.
\end{equation}

\item $G(x):=y(x)^2=\sum_{n\ge0}D_n\,x^n$, where $(D_n)$ are the Domb numbers $A002895$. The function $G$ satisfies $\Sym^2$ of \eqref{eq:heun-domb}:
\begin{equation}\label{eq:domb-ode-theta}
\bigl[\theta_x^3-2x(2\theta_x{+}1)(5\theta_x^2{+}5\theta_x{+}2)+64x^2(\theta_x{+}1)^3\bigr]G=0.
\end{equation}

\item In particular, the Domb kernel arises from the rank-$2$ Gauss object ${}_2F_1(\tfrac16,\tfrac13;1;z)$ by three successive operations: Belyi pullback~$\phi$, algebraic twist by $1/\sqrt{1-4x}$, and $\Sym^2$.
\end{enumerate}
\end{theorem}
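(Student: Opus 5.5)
The plan is to verify the four parts in order. The only genuinely computational step is (b), and everything downstream reduces to series checks.

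For \textup{(a)}, I will compute $1-\phi(x)$ explicitly. One has $(1-4x)^3-108x^2=1-12x-60x^2-64x^3=(1-16x)(1+2x)^2$, so
\[
1-\phi(x)=\frac{(1-16x)(1+2x)^2}{(1-4x)^3}.
\]
Reading off fibres then gives the ramification. Over $0$ there is a double point at $x=0$ and a simple point at $x=\infty$, since the numerator of $\phi$ has degree $2<3$. Over $1$ there is a double point at $x=-\tfrac12$ and a simple point at $x=\tfrac1{16}$. Over $\infty$ there is a triple point at $x=\tfrac14$. Riemann--Hurwitz, $2\cdot3-2=1+1+2$, then shows there are no further ramification points, so $\phi$ is Belyi with passport $[2{+}1,2{+}1,3]$.

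For \textup{(b)}, I will avoid a brute-force chain-rule computation and argue via Riemann schemes plus one accessory-parameter check. The Gauss equation for $f$ has exponents $\{0,0\}$ at $0$, $\{0,\tfrac12\}$ at $1$, and $\{\tfrac16,\tfrac13\}$ at $\infty$.

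Pulling back along $\phi$ gives the following local data:
\begin{itemize}
\item $x=0$: exponents $\{0,0\}$;
\item $x=\tfrac1{16}$: exponents $\{0,\tfrac12\}$;
\item $x=-\tfrac12$: exponents $\{0,1\}$, which is the apparent point that disappears;
\item $x=\tfrac14$: exponents $\{\tfrac12,1\}$;
\item $x=\infty$: exponents $\{\tfrac16,\tfrac13\}$ at the simple preimage.
\end{itemize}
After the twist by $(1-4x)^{-1/2}$, the exponents at $\tfrac14$ and $\infty$ shift, leaving a Fuchsian equation with four singular points $0,\tfrac1{16},\tfrac14,\infty$, i.e.\ a Heun equation.

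The main obstacle is to check that $x=-\tfrac12$ is genuinely nonsingular rather than logarithmic or apparent. For this I will use that $f$ is analytic near $z=1$ in the $\{0\}$-branch and the $(1-z)^{1/2}$-branch composes with $(1+2x)^2$ to give an analytic function. Hence the pulled-back local system is trivial there, and the Fuchsian equation is regular at $-\tfrac12$. The Heun equation is then determined by its exponents up to one accessory parameter. I will fix that parameter by substituting the first few Taylor coefficients of $y$, namely $y=1+2x+12x^2+\cdots$ (from $(1-4x)^{-1/2}=1+2x+6x^2+\cdots$ and $f(\phi)=1+6x^2+\cdots$), into \eqref{eq:heun-domb}. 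The $x^0$ and $x^1$ coefficients already match: $y'(0)-2y(0)=0$, and so on. If this argument feels too slick, the fallback is a direct symbolic substitution of $y$ into \eqref{eq:heun-domb}, reducing to the Gauss equation via $\phi'$ and $\phi''$.

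For \textup{(c)}, I will apply the general symmetric-square formula used in the proof of Theorem~\ref{thm:inverse}(b), namely $\Sym^2(D^2+pD+q)=D^3+3pD^2+(2p^2+p'+4q)D+(4pq+2q')$, with $p,q$ read off from \eqref{eq:heun-domb}. After clearing denominators and converting to Euler form via $zD=\theta$, this should yield \eqref{eq:domb-ode-theta}. Coefficient extraction as in Theorem~\ref{thm:theta-bridge}(c) turns \eqref{eq:domb-ode-theta} into exactly the Domb recurrence \eqref{eq:domb-rec}. That recurrence, with leading coefficient $(n+1)^3$, determines the sequence from $D_0$ alone: at $n=0$ the term $64n^3$ vanishes, giving $D_1=4D_0$. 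So it suffices to check $G(0)=1$, and as a sanity check $[x^1]G=4$ and $[x^2]G=4+24=28$, matching A002895.

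Part \textup{(d)} is then a restatement of \textup{(a)}--\textup{(c)}, combined with Theorem~\ref{thm:domb}, which links the rescaled kernel to $D_n$.
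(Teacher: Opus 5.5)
Your parts (a), (c) and (d) are sound and close to the paper. The Riemann--Hurwitz count replaces the paper's computation of $\phi'$. In (c), your explicit $\Sym^2$ formula is the same Leibniz computation the paper invokes, and your observation that the recurrence forces $D_1=4D_0$, so only $G(0)=1$ needs checking, identifies the sequence more cleanly than the paper's numerical check.

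\textbf{The error is in the Riemann scheme for (b), at $x=\infty$.} The fibre $\phi^{-1}(\infty)$ is the single point $x=\tfrac14$ with multiplicity $3$, so $z=\infty$ has no simple preimage. As you noted yourself in (a), $x=\infty$ is the unramified preimage of $z=0$. Its exponents before the twist are therefore $\{0,0\}$, not $\{\tfrac16,\tfrac13\}$. Since $(1-4x)^{-1/2}\sim(-4x)^{-1/2}$ there, they become $\{\tfrac12,\tfrac12\}$ after the twist. Your table violates the Fuchs relation: the pre-twist exponents at $0,\tfrac1{16},\tfrac14,\infty$ sum to $0+\tfrac12+\tfrac32+\tfrac12=\tfrac52\neq 2$. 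So the one-parameter family in which you intend to fix the accessory parameter does not exist. In any case, \eqref{eq:heun-domb}, whose indicial polynomial at infinity is $16(2\rho-1)^2$, could not be matched to it.

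With the corrected scheme $\{0,0\},\{0,\tfrac12\},\{0,\tfrac12\},\{\tfrac12,\tfrac12\}$ at $0,\tfrac1{16},\tfrac14,\infty$ (sum $2$), your argument closes, provided you add two things.
\begin{itemize}
\item \emph{The step you skipped.} Verify that \eqref{eq:heun-domb} has exactly this scheme: the $y'$-coefficient has residues $1,\tfrac12,\tfrac12$ at $0,\tfrac1{16},\tfrac14$, and the $y$-coefficient has only simple poles there. Then every member of the family can be written as $y''+\bigl(\tfrac1x+\tfrac{1/2}{x-1/16}+\tfrac{1/2}{x-1/4}\bigr)y'+\tfrac{x/4-q}{x(x-1/16)(x-1/4)}\,y=0$. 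Its $x^0$-coefficient equation is $\tfrac1{64}y_1=q\,y_0$, so $y_1=2$ alone pins $q=\tfrac1{32}$, and $y_2=12$ is only a consistency check.
\item \emph{The point $x=-\tfrac12$.} State explicitly what makes it ordinary: the exponents are exactly $\{0,1\}$ and there are no logarithms, giving two analytic solutions with nonvanishing Wronskian. Trivial local monodromy alone would still allow an apparent singularity.
\end{itemize}

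\textbf{Comparison with the paper.} The paper substitutes $\phi,\phi',\phi''$ directly into the Gauss equation. This gives $x(1-4x)^2(1-16x)F''+(1-4x)(64x^2-26x+1)F'-24xF=0$, which it then conjugates by $\sqrt{1-4x}$. Your route replaces that algebra with local bookkeeping and one Taylor coefficient. It also explains why the result is a Heun equation: the ramification index $2$ over $z=1$ absorbs the exponent $\tfrac12$ at $x=-\tfrac12$. The cost is that the argument is only as reliable as the exponent table, which is exactly where the slip occurred. Your stated fallback, direct substitution, is the paper's proof.
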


\begin{proof}
Part~(a): one computes $\phi'(x)=216x(1+2x)/(1-4x)^4$, so the finite critical points are $x=0$ and $x=-\tfrac12$. Their images are $\phi(0)=0$ and $\phi(-\tfrac12)=1$, while $\phi$ has a triple pole at $x=\tfrac14$ mapping to $\infty$. Thus $\phi$ is ramified only over $\{0,1,\infty\}$. The identity $1-\phi(x)=(1+2x)^2(1-16x)/(1-4x)^3$ confirms the ramification pattern: the double factor $(1+2x)^2$ accounts for the order-$2$ ramification over $z=1$.

Part~(b): set $F(x)=f(\phi(x))$, so that
\[
y(x)=\frac{F(x)}{\sqrt{1-4x}}.
\]
The Gauss equation for $f$ is
\[
z(1-z)f''+\left(1-\frac32 z\right)f'-\frac1{18}f=0.
\]
By the chain rule,
\[
F'=\phi' f'(\phi),\qquad
F''=(\phi')^2 f''(\phi)+\phi'' f'(\phi),
\]
hence
\[
f'(\phi)=\frac{F'}{\phi'},
\qquad
f''(\phi)=\frac{1}{(\phi')^2}\left(F''-\frac{\phi''}{\phi'}F'\right).
\]
Substituting $z=\phi(x)$ and these expressions into the Gauss equation gives
\[
\phi(1-\phi)\left(F''-\frac{\phi''}{\phi'}F'\right)
+\left(1-\frac32\phi\right)\phi'F'
-\frac1{18}(\phi')^2F=0.
\]
Now
\[
\phi(x)=\frac{108x^2}{(1-4x)^3},\qquad
\phi'(x)=\frac{216x(1+2x)}{(1-4x)^4},\qquad
\phi''(x)=\frac{216(1+16x+16x^2)}{(1-4x)^5},
\]
and
\[
1-\phi(x)=\frac{(1+2x)^2(1-16x)}{(1-4x)^3}.
\]
Substituting these formulas and multiplying by $(1-4x)^8/\bigl(108x(1+2x)^2\bigr)$ yields
\[
x(1-4x)^2(1-16x)F''+(1-4x)(64x^2-26x+1)F'-24xF=0.
\]
Thus the pulled-back equation already has singularities only at
$x=0,\,\tfrac1{16},\,\tfrac14,\,\infty$.
Now write $F(x)=s(x)\,y(x)$ with $s(x)=\sqrt{1-4x}$.
Then $s'/s=-2/(1-4x)$ and $s''/s=-4/(1-4x)^2$.
Substituting $F=sy$ into the previous equation and simplifying, the coefficient of $y'$ becomes
$(1-4x)(1-30x+128x^2)$,
and the coefficient of $y$ becomes
$(1-4x)\bigl(-2(1-8x)\bigr)$.
Therefore
\[
(1-4x)\Bigl(x(1-4x)(1-16x)y''+(1-30x+128x^2)y'-2(1-8x)y\Bigr)=0.
\]
Dividing by $1-4x$ gives~\eqref{eq:heun-domb}.

Part~(c): since $G=y^2$, the Leibniz-rule computation of Theorem~\ref{thm:theta-bridge} (applied to the Heun operator~\eqref{eq:heun-domb} in place of the Gauss operator) produces a third-order ODE for~$G$. Explicitly: one expresses $\theta_x(y^2)$, $\theta_x(y\theta_x y)$, $\theta_x((\theta_x y)^2)$ using~\eqref{eq:heun-domb}, converts to Euler form, and collects by powers of~$x$. The result is~\eqref{eq:domb-ode-theta}. Extracting the coefficient of $x^n$ gives $(n+1)^3D_{n+1}=2(2n+1)(5n^2+5n+2)D_n-64n^3D_{n-1}$, which is the standard Domb recurrence. As a numerical sanity check, $D_0,\ldots,D_7=1,4,28,256,2716,31504,387136,4951552$.

Part~(d) is a restatement of parts~(a)--(c).
\end{proof}

\begin{theorem}[pullback--twist functoriality for CMFs]\label{thm:pullback-cmf}
Let $B(z,\boldsymbol\lambda)$ be a row basis generating a rank-$r$ CMF with shift matrices $M_v(z,\boldsymbol\lambda)$ and differential component $M_{\theta_z}(z,\boldsymbol\lambda)$. Let $\phi\colon\mathbb P^1\to\mathbb P^1$ be a nonconstant rational map and $\rho(x,\boldsymbol\lambda)$ a nonzero rational function of~$x$ (possibly depending on parameters $\boldsymbol\lambda$). Define
\[
\widetilde B(x,\boldsymbol\lambda)=\rho(x,\boldsymbol\lambda)\,B\bigl(\phi(x),\boldsymbol\lambda\bigr).
\]
Then $\widetilde B$ generates a CMF with
\begin{equation}\label{eq:pullback-shift}
\widetilde M_v(x,\boldsymbol\lambda)=\frac{\sigma_v(\rho)}{\rho}\,M_v\bigl(\phi(x),\boldsymbol\lambda\bigr),
\end{equation}
\begin{equation}\label{eq:pullback-diff}
\widetilde M_{\theta_x}(x,\boldsymbol\lambda)=\frac{x\rho'}{\rho}\,I_r+\frac{x\phi'}{\phi}\,M_{\theta_z}\bigl(\phi(x),\boldsymbol\lambda\bigr).
\end{equation}
If $\rho$ does not depend on $\boldsymbol\lambda$, then the shift matrices transfer by simple composition: $\widetilde M_v(x)=M_v(\phi(x))$. Moreover, when $r=2$ and $\rho^{1/2}$ denotes a scalar square root of $\rho$, $\Sym^2$ converts the rank-$2$ twist $\rho^{1/2}$ into the rank-$3$ twist $\rho$:
\begin{equation}\label{eq:sym2-commutes}
\Sym^2\bigl(\rho^{1/2}\,B\circ\phi\bigr)=\rho\,\Sym^2(B)\circ\phi.
\end{equation}
\end{theorem}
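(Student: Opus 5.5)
The plan is to verify the two defining relations $\widetilde B\,\widetilde M_v=\sigma_v(\widetilde B)$ and $\theta_x\widetilde B=\widetilde B\,\widetilde M_{\theta_x}$ directly from the corresponding relations $B\,M_v=\sigma_v(B)$ and $\theta_z B=B\,M_{\theta_z}$. After that, the cocycle property will be inherited and \eqref{eq:sym2-commutes} will follow from homogeneity of $\Sym^2$. The key structural point is that the shift $\sigma_v$ acts only on the parameters $\boldsymbol\lambda$ and not on the evaluation variable. Hence $\sigma_v$ commutes with substitution $z=\phi(x)$, since $\phi$ does not depend on $\boldsymbol\lambda$, and $\sigma_v(B\circ\phi)=(\sigma_vB)\circ\phi$.

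\textbf{Shift part.} I would write
\[
\sigma_v(\widetilde B)=\sigma_v(\rho)\,(\sigma_vB)\circ\phi=\sigma_v(\rho)\,(B\circ\phi)\,(M_v\circ\phi)=\widetilde B\cdot\frac{\sigma_v(\rho)}{\rho}\,M_v(\phi(x),\boldsymbol\lambda).
\]
This gives \eqref{eq:pullback-shift}, and when $\rho$ is independent of $\boldsymbol\lambda$ the scalar factor is $1$. The cocycle identity \eqref{eq:cocycle-intro} for $\widetilde M$ then follows in one of two ways. Either it comes from uniqueness of the transition matrices once $\widetilde B$ is a basis, or directly: scalar factors of the form $\sigma_v(\rho)/\rho$ are themselves a multiplicative cocycle, and composition with $\phi$ preserves the cocycle identity of $M$ because it commutes with $\sigma_w$. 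The basis property of $\widetilde B$ for the pulled-back module also needs checking. A linear relation $\widetilde B\,c(x)=0$ with rational $c$ would, after dividing by $\rho$, be a relation among the entries of $B\circ\phi$. For nonconstant $\phi$, I would rule this out by the standard argument that the pulled-back module has the same rank; this can be phrased via the differential relation below, which pulls back $\theta_z$-stability.

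\textbf{Differential part.} The chain rule gives $\theta_x(h\circ\phi)=x\phi'(x)\,(h'\circ\phi)=\frac{x\phi'}{\phi}\,(\theta_z h)\circ\phi$. Hence
\[
\theta_x\widetilde B=\frac{x\rho'}{\rho}\widetilde B+\rho\,\frac{x\phi'}{\phi}\,(B\circ\phi)(M_{\theta_z}\circ\phi)
=\widetilde B\Bigl(\frac{x\rho'}{\rho}I_r+\frac{x\phi'}{\phi}M_{\theta_z}(\phi(x),\boldsymbol\lambda)\Bigr),
\]
which is \eqref{eq:pullback-diff}. The compatibility (flatness) between $\widetilde M_{\theta_x}$ and the shift matrices is again inherited by the same substitution argument.

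\textbf{Symmetric square and main obstacle.} For \eqref{eq:sym2-commutes}, I would set $B=(u,v)$. Then $\rho^{1/2}B\circ\phi=(\rho^{1/2}u\circ\phi,\rho^{1/2}v\circ\phi)$, and its quadratic monomials are $\rho\,(u^2,uv,v^2)\circ\phi$ by degree-$2$ homogeneity. Correspondingly, $\Sym^2$ of the matrix formula \eqref{eq:pullback-shift} with twist $\rho^{1/2}$ yields factor $\sigma_v(\rho)/\rho$, because each entry of \eqref{eq:sym2formula} is quadratic. I expect the main obstacle to be the careful statement when $\rho^{1/2}$ is only algebraic (as for Domb, $\rho^{1/2}=(1-4x)^{-1/2}$). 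The rank-$2$ object then lives over an algebraic extension. The point to stress is that $\sigma_v(\rho^{1/2})/\rho^{1/2}$ and $x(\rho^{1/2})'/\rho^{1/2}=\tfrac12 x\rho'/\rho$ are still rational, so the transported matrices remain rational. The $\Sym^2$ side involves only $\rho$ itself, so it is a CMF over $K(x,\boldsymbol\lambda)$.
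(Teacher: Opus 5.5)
Your proposal is correct and is essentially the paper's proof: the same direct check of $\sigma_v(\widetilde B)=\widetilde B\,\widetilde M_v$ using that $\sigma_v$ commutes with the substitution $z=\phi(x)$, the same chain-rule computation for $\theta_x\widetilde B$, and degree-$2$ homogeneity for \eqref{eq:sym2-commutes}. Your direct verification of the cocycle and flatness identities is a correct addition that the paper leaves implicit.

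Two of your side remarks are false in general, though neither is needed for the statement. First, pullback need not preserve rank, and $\theta$-stability only gives a spanning set: for $B=z^{\lambda}(1,\sqrt z)$ with $M_1=zI_2$ and $\phi(x)=x^2$ one gets $\widetilde B=x^{2\lambda}(1,x)$, whose entries are dependent over $K(x,\lambda)$. Second, $\sigma_v(\rho^{1/2})/\rho^{1/2}=(\sigma_v(\rho)/\rho)^{1/2}$ is rational only in special cases such as parameter-independent $\rho$ (as for Domb), and not, for instance, for $\rho=x-\lambda$.
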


\begin{proof}
For the shift matrices: $\sigma_v(\widetilde B)=\sigma_v(\rho)\,\sigma_v(B)(\phi(x))=\sigma_v(\rho)\,B(\phi(x))\,M_v(\phi(x))=\widetilde B\cdot(\rho^{-1}\sigma_v(\rho))\,M_v(\phi(x))$.

For the differential component: $\theta_x(\widetilde B)=x\rho'\,B(\phi(x))+\rho\,x\phi'\,B'(\phi(x))$. Since $B'(\phi(x))=B(\phi(x))\,M_{\theta_z}(\phi(x))/\phi(x)$ (because $\theta_z B=B M_{\theta_z}$ and $\theta_z=z\,d/dz$), we get $\theta_x(\widetilde B)=\widetilde B\bigl((x\rho'/\rho)I_r+(x\phi'/\phi)M_{\theta_z}(\phi(x))\bigr)$.

For the commutativity: if $\widetilde b=\rho^{1/2}b(\phi(x))$ for a rank-$2$ basis element $b$, then $\widetilde b^2=\rho\,b(\phi(x))^2$, and analogously for $\widetilde b_1\widetilde b_2$, so the $\Sym^2$ basis transforms as $\rho\cdot(\Sym^2 B)\circ\phi$.
\end{proof}

\begin{remark}[unified picture]\label{rem:unified}
With Theorems~\ref{thm:domb-pullback} and~\ref{thm:pullback-cmf}, all three printed order-$3$ kernels are covered by a single mechanism: $\Sym^2$ of a rank-$2$ hypergeometric object, composed (when necessary) with a Belyi pullback and algebraic twist, followed by summation. Table~\ref{tab:kernels-v6} summarizes the complete picture.
\end{remark}

\begin{table}[ht]
\centering
\caption{Complete identification of the three printed order-$3$ kernels.}\label{tab:kernels-v6}
\smallskip
\begin{tabular}{lcccc}
\hline
Kernel & OEIS & AZ case & Gauss params & Pullback \\
\hline
First $\pi$ & $A036917$ & $(\beta)$ & $(\tfrac12,\tfrac12,1)$ & identity \\
Catalan & non-sporadic & --- & $(\tfrac12,1,\tfrac32)$ & identity \\
Second $\pi$ & $A002895$ (Domb) & $(\alpha)$ & $(\tfrac16,\tfrac13,1)$ & $108x^2/(1{-}4x)^3$ \\
\hline
\end{tabular}
\end{table}

\section{Integer sequences from the Belyi pullback scan}\label{sec:belyi-scan}

\subsection{Setup and the output list}

Besides the three printed order-$3$ kernels discussed in Sections~\ref{sec:summation}--\ref{sec:aperylike-clausen}, the Belyi-pullback scan also produces a small set of further integer sequences. Concretely, we scanned
\[
[x^n]\;\lambda^n\,\Ftwoone(a,b;c;\phi(x))^2
\]
over $5040$ tuples $(a,b,c,\phi,u,\lambda)$ and retained those rows for which the scaled coefficients were integral for $0\le n\le 19$ in exact rational arithmetic. The full scan data are archived at \url{https://doi.org/10.5281/zenodo.19286066}.

For the $11$ rows listed in Table~\ref{tab:belyi-scan-11}, we write
\[
A(x):=\sum_{n\ge 0} a_n x^n
=
\Ftwoone(a,b;c;\phi(\lambda x))^2
=
\sum_{n\ge 0}\lambda^n [x^n]\Ftwoone(a,b;c;\phi(x))^2\,x^n.
\]
The row numbers \(\#2,\#3,\#4,\#5,\#9,\#10,\#11,\#12,\#13,\#14,\#15\) are the scan labels from the dataset.

\begin{table}[H]
\centering
\caption{The $11$ additional integer sequences found in the Belyi-pullback scan. Here \(a_n=[x^n]\Ftwoone(a,b;c;\phi(\lambda x))^2\), and the OEIS status is the result of a web lookup on 2026-03-29.}
\label{tab:belyi-scan-11}
\smallskip
\scriptsize
\renewcommand{\arraystretch}{1.15}
\begin{tabular}{@{}r l l c p{7.5cm} l@{}}
\toprule
\# & $(a,b,c)$ & $\phi(x)$ & $\lambda$ & first $8$ terms $(a_0,\dots,a_7)$ & OEIS \\
\midrule
2 & \(\bigl(\tfrac12,\tfrac12,2\bigr)\) & \(\dfrac{4x}{(1-x)^2}\) & \(4\) &
1, 4, 60, 888, 13960, 231904, 4025904, 72372528 & not found \\
3 & \(\bigl(\tfrac16,\tfrac23,\tfrac32\bigr)\) & \(\dfrac{27x}{(1-4x)^3}\) & \(1\) &
1, 4, 94, 2196, 56061, 1535040, 44202600, 1321014672 & not found \\
4 & \(\bigl(\tfrac13,\tfrac13,1\bigr)\) & \(\dfrac{27x}{(1-4x)^3}\) & \(1\) &
1, 6, 153, 3912, 108042, 3161196, 96340410, 3024934080 & not found \\
5 & \(\bigl(\tfrac16,\tfrac12,\tfrac12\bigr)\) & \(\dfrac{27x}{(1-4x)^3}\) & \(1\) &
1, 9, 270, 8154, 259209, 8529921, 287329140, 9841383288 & not found \\
9 & \(\bigl(\tfrac14,\tfrac34,\tfrac12\bigr)\) & \(\dfrac{4x}{(1-x)^2}\) & \(1\) &
1, 3, 17, 95, 537, 3059, 17513, 100607 & not found \\
10 & \(\bigl(\tfrac13,\tfrac23,\tfrac32\bigr)\) & \(\dfrac{27x}{(1-4x)^3}\) & \(1\) &
1, 8, 208, 5376, 148480, 4317184, 130351104, 4049600512 & not found \\
11 & \(\bigl(\tfrac14,\tfrac34,\tfrac32\bigr)\) & \(\dfrac{27x}{(1-4x)^3}\) & \(4\) &
1, 27, 2754, 279855, 30556062, 3525880590, 423488705220, 52412646653559 & not found \\
12 & \(\bigl(\tfrac13,\tfrac23,1\bigr)\) & \(\dfrac{27x}{(1-4x)^3}\) & \(1\) &
1, 12, 360, 10776, 337656, 10931616, 362216088, 12210185424 & not found \\
13 & \(\bigl(\tfrac13,\tfrac12,\tfrac12\bigr)\) & \(\dfrac{27x}{(1-4x)^3}\) & \(1\) &
1, 18, 621, 21168, 738090, 26128764, 934657434, 33688028808 & not found \\
14 & \(\bigl(\tfrac23,\tfrac23,1\bigr)\) & \(\dfrac{27x}{(1-4x)^3}\) & \(1\) &
1, 24, 882, 31560, 1138569, 41331312, 1507503024, 55190279616 & not found \\
15 & \(\bigl(\tfrac12,\tfrac12,1\bigr)\) & \(4x(1-x)\) & \(4\) &
1, 8, 56, 384, 2648, 18496, 131008, 940032 & not found \\
\bottomrule
\end{tabular}
\end{table}

\subsection{Integrality}

\begin{theorem}\label{thm:belyi-integrality}
For each of the rows \(\#2,\#3,\#4,\#5,\#9,\#10,\#11,\#12,\#13,\#14,\#15\), let
\[
G(x):=\Ftwoone(a,b;c;\phi(x))^2,
\qquad
A(x):=G(\lambda x)=\sum_{n\ge 0} a_n x^n,
\]
where \((a,b,c)\), \(\phi(x)\), and \(\lambda\) are those of Table~\ref{tab:belyi-scan-11}. Then
\[
A(x)\in \Z[[x]].
\]
Equivalently,
\[
a_n=\lambda^n[x^n]\Ftwoone(a,b;c;\phi(x))^2\in \Z
\qquad (n\ge 0).
\]
\end{theorem}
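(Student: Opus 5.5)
The plan is to reduce integrality of $A(x)=G(\lambda x)$ to integrality of the unsquared pullback $y(x):=\Ftwoone(a,b;c;\phi(\lambda x))$, since $\Z[[x]]$ is a ring. For each row we expand termwise:
\[
y(x)=\sum_{k\ge0} h_k\,\phi(\lambda x)^k,\qquad h_k=\frac{(a)_k(b)_k}{(c)_k\,k!}.
\]
For the three pullbacks of Table~\ref{tab:belyi-scan-11} we have $\phi(\lambda x)^k=(C\lambda)^k x^k\,\psi(x)^{-k}$, where
\[
(C,\psi)\in\{(4,(1-x)^2),\ (27,(1-4x)^3)\},
\]
or, for the row with $\phi(x)=4x(1-x)$, $\phi(\lambda x)^k=(4\lambda)^k x^k(1-\lambda x)^k$. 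The factors $\psi(\lambda x)^{-k}$ and $(1-\lambda x)^k$ lie in $\Z[[x]]$. Moreover $x^k$ contributes only to coefficients of index $\ge k$, so every coefficient of $y$ is a finite sum. Hence it suffices to show that the normalized weights
\[
w_k:=h_k\,(C\lambda)^k
\]
are integers, or at least that the finite sums they produce are.

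\textbf{Step 1 (easy rows).} I would first check the rows where $w_k$ is visibly a product of binomial coefficients. For $(\tfrac13,\tfrac23,1)$ with $C=27$ one gets $w_k=(3k)!/k!^3$. For $(\tfrac12,\tfrac12,1)$ with $4x(1-x)$ and $\lambda=4$ one gets $w_k=\binom{2k}{k}^2$. Rows of the type $(\tfrac13,\tfrac13,1)$ and $(\tfrac23,\tfrac23,1)$ should give $27^k(\tfrac13)_k^2/k!^2$, which is a standard integral sequence. Rows with $c=b$, such as $(\tfrac16,\tfrac12,\tfrac12)$ and $(\tfrac13,\tfrac12,\tfrac12)$, collapse to binomial series $(1-\phi)^{-a}$. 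In those cases I would instead factor $1-\phi(x)$ as a rational function and check directly that $(1-\phi)^{-a}$ lies in $\Z[[x]]$, for example by writing it as a perfect power times a unit with $\Z$-integral binomial expansion.

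\textbf{Step 2 (general rows via a $p$-adic criterion).} For the remaining rows, such as $(\tfrac16,\tfrac23,\tfrac32)$, $(\tfrac14,\tfrac34,\tfrac12)$, $(\tfrac14,\tfrac34,\tfrac32)$, $(\tfrac13,\tfrac23,\tfrac32)$ and $(\tfrac12,\tfrac12,2)$, I would argue prime by prime. If $p$ does not divide the denominators of $a,b,c$, Dwork's/Christol's criterion (interlacing of $\{a,b\}$ and $\{c,1\}$ under all Galois twists $x\mapsto\langle p^{-1}x\rangle$) gives $h_k\in\Z_{(p)}$; this must be checked for each row. For $p\in\{2,3\}$, the only denominators present, the factor $(C\lambda)^k$ with $C\lambda\in\{16,27,108\}$ must absorb the $p$-adic valuation of $h_k$. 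This reduces to a Legendre-type estimate
\[
v_p(h_k)\ge -k\,v_p(C\lambda),
\]
which I would prove from Legendre's formula applied to $(a)_k=\prod(a+j)$ after clearing the denominator $d$ of $a$. For instance, $d^k(a)_k$ is a product of an arithmetic progression.

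\textbf{Step 3 (the main obstacle).} The hard part is any row where the interlacing criterion fails for some $p$. This includes $c=\tfrac32$, where $(\tfrac32)_k$ introduces odd denominators $2k+1$ that are not cancelled termwise, and possibly $c=2$. There the $w_k$ themselves are not integral, and cancellation must come from the pullback. For these rows I would first try a classical transformation: a contiguity relation to move $c=\tfrac32$ to $c=\tfrac12$ or $c=1$, or the Goursat cubic/quadratic identities attached to exactly these Belyi maps. The aim is to rewrite $y(x)$ as an algebraic factor times a hypergeometric function with integral weights, for example $\Ftwoone(\tfrac12,1;\tfrac32;z^2)=\operatorname{artanh}(z)/z$-type closed forms. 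Failing that, I would use the recurrence from Theorem~\ref{thm:theta-bridge} transported by Theorem~\ref{thm:pullback-cmf} to obtain an explicit order-3 recurrence for $a_n$. I would then prove integrality of $y$ itself, or of $y^2$ directly, by a Lucas-type $p$-adic congruence argument, with the archived scan data supplying the base cases.
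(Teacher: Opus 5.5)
\textbf{There is a genuine gap.} You reduce the problem to the unsquared pullback $y(x)=\Ftwoone(a,b;c;\phi(\lambda x))$. That is only a sufficient condition, and it is false for three of the eleven rows.

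\begin{itemize}
\item Row \#5: $y=(1-\phi)^{-1/6}=1+\tfrac92x+\cdots$.
\item Row \#9: $y=1+\tfrac38\phi(x)+\cdots=1+\tfrac32x+\cdots$.
\item Row \#11: $y=1+\tfrac18\phi(4x)+\cdots=1+\tfrac{27}{2}x+\cdots$.
\end{itemize}

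Only after squaring do the coefficients become integers ($a_1=9,3,27$). So your Step~1 check that $(1-\phi)^{-a}\in\Z[[x]]$ fails for \#5. The Legendre-type bound $v_p(h_k)\ge -k\,v_p(C\lambda)$ of Step~2 fails at $p=2$ for \#5, \#9 and \#11, where the weights are $w_1=\tfrac92,\tfrac32,\tfrac{27}{2}$. No contiguity relation or Goursat rewriting of $y$ can rescue this, because the statement about $y$ itself is false.

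Your diagnosis of the obstacle is also misplaced. For $c=\tfrac32$ the odd denominators do cancel termwise:
\begin{itemize}
\item row \#10: $27^kh_k=4^k\binom{3k}{k}/(2k+1)$, a multiple of a Fuss--Catalan number;
\item row \#3: $27^kh_k=3^k\prod_{j=1}^{2k}(3j-2)/(2k+1)!$.
\end{itemize}
So those rows fall to your Steps~1--2 exactly as in the paper. The real obstruction is $2$-adic, and it occurs in rows where integrality genuinely requires the square.

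\textbf{The missing idea is to square before expanding.} The paper does this with closed forms for $G$ itself:
\begin{itemize}
\item Row \#5: $G=(1-\phi)^{-1/3}=\sum_m\frac{(1/3)_m}{m!}\phi^m$, with $27^m(1/3)_m/m!=9^m\prod_{j=1}^m(3j-2)/m!\in\Z$.
\item Row \#9: the quadratic identity $\Ftwoone(\tfrac14,\tfrac34;\tfrac12;z)^2=\tfrac12\bigl((1-z)^{-1}+(1-z)^{-1/2}\bigr)$, together with a parity argument that absorbs the factor $\tfrac12$.
\item Row \#11: $\Ftwoone(\tfrac14,\tfrac34;\tfrac32;z)^2=2/(1+\sqrt{1-z})=\sum_m C_m(z/4)^m$, with $C_m$ the Catalan numbers and $\phi(4x)/4=27x/(1-16x)^3$.
\end{itemize}

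Your Step~3 fallback, a Lucas-type congruence for $y^2$ with the archived data as base cases, does not fill this hole. The argument is never formulated, and finitely many computed terms cannot serve as base cases for it.

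For the other eight rows your argument coincides with the paper's. In row \#15 your direct weights $16^kh_k=\binom{2k}{k}^2$, with $\phi(4x)=16x(1-4x)$, are even simpler than the paper's route through Clausen's identity.
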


\begin{proof}
Write
\[
F(z)=\Ftwoone(a,b;c;z)=\sum_{m\ge 0} f_m z^m.
\]

We shall repeatedly use the following elementary observation: if \(\mu,\nu\in\Z\),
\(r\in\Z_{\ge 1}\), and \(\mu^m f_m\in\Z\) for all \(m\ge 0\), then
\[
F\!\left(\frac{\mu x}{(1-\nu x)^r}\right)
=\sum_{m\ge 0} (\mu^m f_m)\,x^m(1-\nu x)^{-rm}\in\Z[[x]],
\]
because each \((1-\nu x)^{-rm}\) lies in \(\Z[[x]]\).

We also use the following \(p\)-adic counting fact. For \(r\in\{1,2\}\) and \(N\ge 1\), put
\[
P_r(N):=\prod_{j=1}^N (3j-r).
\]
If \(p\neq 3\) is prime, then for each \(\ell\ge 1\) the congruence
\[
3j\equiv r \pmod{p^\ell}
\]
has exactly one residue class modulo \(p^\ell\), hence among \(j=1,\dots,N\) it occurs at
least \(\lfloor N/p^\ell\rfloor\) times. Therefore
\[
v_p(P_r(N))\ge \sum_{\ell\ge 1}\Bigl\lfloor \frac{N}{p^\ell}\Bigr\rfloor=v_p(N!).
\]
For \(p=3\) we only need the crude bound
\[
v_3(N!)=\sum_{\ell\ge 1}\Bigl\lfloor \frac{N}{3^\ell}\Bigr\rfloor\le \frac N2.
\]

We now treat the eleven rows.

\medskip
\noindent\textbf{Row \#2.}
Here
\[
f_m=\frac{(\frac12)_m^2}{(2)_m\,m!}.
\]
Using
\[
\Bigl(\frac12\Bigr)_m=\frac{(2m)!}{4^m m!},
\qquad
(2)_m=(m+1)!,
\]
we get
\[
16^m f_m
=16^m\frac{(\frac12)_m^2}{(2)_m\,m!}
=\frac{1}{m+1}\binom{2m}{m}^2\in\Z.
\]
Since
\[
\phi(4x)=\frac{16x}{(1-4x)^2},
\]
the observation above yields
\[
\Ftwoone\!\left(\tfrac12,\tfrac12;2;\phi(4x)\right)\in\Z[[x]].
\]
Squaring, we get \(A(x)\in\Z[[x]]\).

\medskip
\noindent\textbf{Row \#3.}
Here
\[
f_m=\frac{(\frac16)_m(\frac23)_m}{(\frac32)_m\,m!}.
\]
A direct simplification gives
\[
27^m f_m
=3^m\frac{\prod_{k=1}^m(6k-5)(6k-2)}{(2m+1)!}
=3^m\frac{\prod_{j=1}^{2m}(3j-2)}{(2m+1)!}.
\]
Let \(p\neq 3\) and \(\ell\ge 1\). The congruence \(3j\equiv 2\pmod{p^\ell}\) defines a
nonzero residue class modulo \(p^\ell\). Hence the number of solutions among
\(j=1,\dots,2m\) equals the number of solutions among \(j=0,\dots,2m\), and the latter is
at least \(\lfloor(2m+1)/p^\ell\rfloor\). Therefore
\[
v_p\!\left(\prod_{j=1}^{2m}(3j-2)\right)\ge
\sum_{\ell\ge 1}\Bigl\lfloor\frac{2m+1}{p^\ell}\Bigr\rfloor
=
v_p((2m+1)!).
\]
For \(p=3\) we have
\[
v_3((2m+1)!)
<
\sum_{\ell\ge 1}\frac{2m+1}{3^\ell}
=
\frac{2m+1}{2}
<m+1,
\]
hence \(v_3((2m+1)!)\le m\). So \(27^m f_m\in\Z\) for all \(m\).
Since
\[
\phi(x)=\frac{27x}{(1-4x)^3},
\]
it follows that
\[
\Ftwoone\!\left(\tfrac16,\tfrac23;\tfrac32;\phi(x)\right)\in\Z[[x]],
\]
and therefore \(A(x)\in\Z[[x]]\).

\medskip
\noindent\textbf{Rows \#4 and \#14.}
For \#4,
\[
f_m=\frac{(\frac13)_m^2}{(m!)^2},
\qquad
27^m f_m
=
3^m\left(\frac{\prod_{j=1}^m(3j-2)}{m!}\right)^2.
\]
For \(p\neq 3\) the counting fact gives
\[
v_p\!\left(\prod_{j=1}^m(3j-2)\right)\ge v_p(m!),
\]
and for \(p=3\),
\[
2v_3(m!)\le 2\sum_{\ell\ge 1}\frac{m}{3^\ell}=m.
\]
Hence \(27^m f_m\in\Z\).

For \#14,
\[
f_m=\frac{(\frac23)_m^2}{(m!)^2},
\qquad
27^m f_m
=
3^m\left(\frac{\prod_{j=1}^m(3j-1)}{m!}\right)^2,
\]
and the same argument shows \(27^m f_m\in\Z\).

Since both rows have
\[
\phi(x)=\frac{27x}{(1-4x)^3},
\]
we obtain
\[
\Ftwoone\!\left(\tfrac13,\tfrac13;1;\phi(x)\right)\in\Z[[x]],
\qquad
\Ftwoone\!\left(\tfrac23,\tfrac23;1;\phi(x)\right)\in\Z[[x]],
\]
hence after squaring, \(A(x)\in\Z[[x]]\) in both cases.

\medskip
\noindent\textbf{Row \#10.}
Here
\[
f_m=\frac{(\frac13)_m(\frac23)_m}{(\frac32)_m\,m!}.
\]
Using
\[
\Bigl(\frac13\Bigr)_m\Bigl(\frac23\Bigr)_m=\frac{(3m)!}{27^m m!},
\qquad
\Bigl(\frac32\Bigr)_m=\frac{(2m+1)!}{4^m m!},
\]
we get
\[
27^m f_m
=
\frac{4^m(3m)!}{(2m+1)!\,m!}
=
\frac{4^m}{2m+1}\binom{3m}{m}.
\]
The number
\[
\frac{1}{2m+1}\binom{3m}{m}
=
\frac{1}{3m+1}\binom{3m+1}{m}
\]
is the Fuss--Catalan number of order \(3\), hence is an integer. Therefore
\(27^m f_m\in\Z\). Since
\[
\phi(x)=\frac{27x}{(1-4x)^3},
\]
we conclude that
\[
\Ftwoone\!\left(\tfrac13,\tfrac23;\tfrac32;\phi(x)\right)\in\Z[[x]],
\]
and hence \(A(x)\in\Z[[x]]\).

\medskip
\noindent\textbf{Row \#12.}
Here
\[
f_m=\frac{(\frac13)_m(\frac23)_m}{(1)_m\,m!},
\]
so
\[
27^m f_m=\frac{(3m)!}{(m!)^3}\in\Z.
\]
Again
\[
\phi(x)=\frac{27x}{(1-4x)^3},
\]
hence
\[
\Ftwoone\!\left(\tfrac13,\tfrac23;1;\phi(x)\right)\in\Z[[x]],
\]
and therefore \(A(x)\in\Z[[x]]\).

\medskip
\noindent\textbf{Rows \#5 and \#13.}
In \#5 we have \(c=b=\tfrac12\), hence
\[
\Ftwoone\!\left(\tfrac16,\tfrac12;\tfrac12;z\right)=(1-z)^{-1/6},
\]
so
\[
G(x)=\left(1-\phi(x)\right)^{-1/3}.
\]
Thus
\[
G(x)=\sum_{m\ge 0}\frac{(\frac13)_m}{m!}\,\phi(x)^m.
\]
Now
\[
27^m\frac{(\frac13)_m}{m!}
=
9^m\frac{\prod_{j=1}^m(3j-2)}{m!}.
\]
For \(p\neq 3\) the counting fact gives integrality, and for \(p=3\) we have
\(v_3(m!)\le m/2<2m\). Hence
\[
27^m\frac{(\frac13)_m}{m!}\in\Z.
\]
Therefore \(G(x)\in\Z[[x]]\), i.e.\ \(A(x)\in\Z[[x]]\).

Similarly, in \#13 we have \(c=b=\tfrac12\), so
\[
\Ftwoone\!\left(\tfrac13,\tfrac12;\tfrac12;z\right)=(1-z)^{-1/3},
\]
hence
\[
G(x)=\left(1-\phi(x)\right)^{-2/3}
=
\sum_{m\ge 0}\frac{(\frac23)_m}{m!}\,\phi(x)^m.
\]
And
\[
27^m\frac{(\frac23)_m}{m!}
=
9^m\frac{\prod_{j=1}^m(3j-1)}{m!}\in\Z
\]
by the same argument. Thus \(A(x)\in\Z[[x]]\).

\medskip
\noindent\textbf{Row \#9.}
The quadratic Schwarz identity gives
\[
\Ftwoone\!\left(\tfrac14,\tfrac34;\tfrac12;z\right)^2
=
\frac12\left((1-z)^{-1}+(1-z)^{-1/2}\right).
\]
Let
\[
u=\phi(x)=\frac{4x}{(1-x)^2}.
\]
Then
\[
A(x)=G(x)=\frac12\left((1-u)^{-1}+(1-u)^{-1/2}\right).
\]
Now
\[
(1-u)^{-1}
=
\sum_{m\ge 0} u^m
=
\sum_{m\ge 0} 4^m x^m(1-x)^{-2m}\in\Z[[x]],
\]
and
\[
(1-u)^{-1/2}
=
\sum_{m\ge 0}\binom{2m}{m}\left(\frac{u}{4}\right)^m
=
\sum_{m\ge 0}\binom{2m}{m}x^m(1-x)^{-2m}\in\Z[[x]].
\]
Moreover, for every \(m\ge 1\), both \(4^m\) and \(\binom{2m}{m}\) are even, so the
coefficient of \(x^n\) is even in both series for all \(n\ge 1\), and the constant term of
their sum is \(2\). Hence the sum lies in \(2\Z[[x]]\), and therefore
\[
A(x)\in\Z[[x]].
\]

\medskip
\noindent\textbf{Row \#11.}
Again by a quadratic Schwarz identity,
\[
\Ftwoone\!\left(\tfrac14,\tfrac34;\tfrac32;z\right)^2
=
\frac{2}{1+\sqrt{1-z}}.
\]
Using the Catalan generating series,
\[
\frac{2}{1+\sqrt{1-z}}
=
\sum_{m\ge 0} C_m\left(\frac{z}{4}\right)^m,
\qquad
C_m=\frac{1}{m+1}\binom{2m}{m}\in\Z.
\]
Now
\[
\phi(4x)=\frac{108x}{(1-16x)^3},
\]
hence
\[
A(x)=G(4x)
=
\sum_{m\ge 0} C_m\left(\frac{27x}{(1-16x)^3}\right)^m.
\]
Each summand lies in \(\Z[[x]]\), therefore \(A(x)\in\Z[[x]]\).

\medskip
\noindent\textbf{Row \#15.}
Let
\[
u=\phi(4x)=16x(1-4x).
\]
The Clausen identity in the form
\[
\Ftwoone\!\left(\tfrac12,\tfrac12;1;u\right)^2
=
\FthreeTwo\!\left(\tfrac12,\tfrac12,\tfrac12;1,1;4u(1-u)\right)
\]
gives
\[
A(x)=G(4x)
=
\FthreeTwo\!\left(\tfrac12,\tfrac12,\tfrac12;1,1;
64x(1-4x)(1-8x)^2\right).
\]
Write
\[
\FthreeTwo\!\left(\tfrac12,\tfrac12,\tfrac12;1,1;t\right)
=
\sum_{m\ge 0} c_m t^m,
\qquad
c_m=\frac{(\frac12)_m^3}{(m!)^3}.
\]
Then
\[
64^m c_m=\binom{2m}{m}^3\in\Z.
\]
Since
\[
t=64x(1-4x)(1-8x)^2,
\]
each term \(c_m t^m\) has integer coefficients. Therefore \(A(x)\in\Z[[x]]\).

This completes the proof for all eleven rows.
\end{proof}

\subsection{Symmetric-square classification}

\begin{proposition}[Sym$^2$ classification of the scan output]\label{prop:belyi-sym2}
For each row of Table~\ref{tab:belyi-scan-11}, the generating function
\[
A(x)=\Ftwoone(a,b;c;\phi(\lambda x))^2
\]
is obtained from a rank-$2$ Gauss hypergeometric object by taking \(\Sym^2\) and then applying the rational pullback \(z=\phi(\lambda x)\).

More precisely:
\[
\text{rows \#5,\#13 admit annihilating operators of order }1,
\]
\[
\text{rows \#9,\#11 admit annihilating operators of order at most }2,
\]
\[
\text{rows \#2,\#3,\#4,\#10,\#12,\#14,\#15 admit annihilating operators of order at most }3.
\]
Thus the $11$ rows fall into three classes: binomial collapse, quadratic algebraic, and order-$3$ pullback cases.
\end{proposition}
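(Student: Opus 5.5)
The plan has three parts. First we establish the general $\Sym^2$-pullback structure, which covers all eleven rows at once. Then we sharpen the order bounds case by case, reusing the closed forms already obtained in the proof of Theorem~\ref{thm:belyi-integrality}.

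\textbf{The general structure.} Let $B_f=(f,\theta f)$ be the rank-$2$ Gauss basis at the given $(a,b,c)$. By Proposition~\ref{prop:general-square-gauge}(a), the triple $B_g=(g,\theta g,\theta^2 g)$ with $g=f^2$ lies in the span of $\Sym^2(B_f)$. Theorem~\ref{thm:theta-bridge}(b) shows that $g$ is annihilated by the third-order operator $\mathcal L_{a,b,c}$ of \eqref{eq:square-operator-general}. Next we apply Theorem~\ref{thm:pullback-cmf} with the map $x\mapsto\phi(\lambda x)$ and trivial twist $\rho=1$. The differential component then transforms by \eqref{eq:pullback-diff}, namely $\widetilde M_{\theta_x}=\tfrac{x\phi_\lambda'}{\phi_\lambda}M_{\theta_z}(\phi_\lambda(x))$. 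Hence the rank-$3$ module spanned by $(g,\theta g,\theta^2 g)\circ\phi_\lambda$ is closed under $\theta_x$. Consequently $A(x)=g(\phi(\lambda x))$ satisfies a linear ODE of order at most $3$ with rational coefficients. Concretely, this ODE is the chain-rule pullback of $\mathcal L_{a,b,c}$, obtained as in the computation in Theorem~\ref{thm:domb-pullback}(b). By \eqref{eq:sym2-commutes} with $\rho=1$, this module is exactly $\Sym^2(B_f)\circ\phi_\lambda$. This gives the first sentence of the proposition, and also the order-$\le3$ bound for rows \#2, \#3, \#4, \#10, \#12, \#14 and \#15.

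\textbf{Order $1$ for rows \#5 and \#13.} Here $b=c$, so $f=(1-z)^{-a}$ and $A=(1-\phi(\lambda x))^{-2a}$. Then $A'/A=2a\,\phi_\lambda'/(1-\phi_\lambda)$ is rational, and $A$ is annihilated by the first-order operator $D-2a\phi_\lambda'/(1-\phi_\lambda)$.

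\textbf{Order at most $2$ for rows \#9 and \#11.} We use the closed forms from the proof of Theorem~\ref{thm:belyi-integrality}. For row \#9, $A=\tfrac12\bigl((1-u)^{-1}+(1-u)^{-1/2}\bigr)$ with $u=\phi(\lambda x)$. This is a sum of two hyperexponential functions, each satisfying a first-order equation with rational coefficients. The least common left multiple of those two operators has order at most $2$. For row \#11, $A=2/(1+\sqrt{1-u})$ lies in the field $\Q(x)(\sqrt{1-u})$, which has degree $2$ over $\Q(x)$. Writing $A=r_1+r_2\sqrt{1-u}$, the same lclm argument applies.

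The main obstacle is the claim in the general part that the pulled-back operator has order at most $3$ and not more. This requires the $\theta_x$-closure from \eqref{eq:pullback-diff} together with the rationality of $x\phi_\lambda'/\phi_\lambda$, and the latter is immediate since $\phi$ is rational. The statement asks only for upper bounds, so we do not need to prove that order $3$ is actually attained in the third class, and we will not attempt it.
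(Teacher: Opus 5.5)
Your proposal is correct and follows essentially the same route as the paper. The $\Sym^2$ statement is immediate from $A=f^2$ with $f=\Ftwoone(a,b;c;\phi(\lambda x))$. Rows \#5 and \#13 collapse to $(1-\phi(\lambda x))^{-2a}$, which has a rational logarithmic derivative. Rows \#9 and \#11 are handled via the quadratic Schwarz closed forms, and the remaining rows via the chain-rule pullback of the order-$3$ operator from Theorem~\ref{thm:theta-bridge}.

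There are two minor deviations, and both variants are valid. You treat row \#15 by the same uniform pullback argument, where the paper uses the $\FthreeTwo$ Clausen pullback. For rows \#9 and \#11 you take the lclm of two first-order operators annihilating the hyperexponential summands, where the paper uses the $d/dx$-stable space $\Q(x)\oplus\Q(x)\sqrt{1-u}$.
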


\begin{proof}
The first assertion is tautological: if
\[
f(x)=\Ftwoone(a,b;c;\phi(\lambda x)),
\]
then \(A(x)=f(x)^2\), so \(A(x)\) belongs to the symmetric square of the rank-$2$ differential module generated by \(f\). Thus every row is a \(\Sym^2\)-pullback of a Gauss hypergeometric object.

For rows \#5 and \#13, the explicit formulas already recorded above are
\[
A_5(x)=\frac{1-4x}{\bigl(1-39x+48x^2-64x^3\bigr)^{1/3}},
\qquad
A_{13}(x)=\frac{(1-4x)^2}{\bigl(1-39x+48x^2-64x^3\bigr)^{2/3}}.
\]
Hence \(A_5'(x)/A_5(x)\) and \(A_{13}'(x)/A_{13}(x)\) are rational functions, so both
series satisfy first-order linear differential equations over \(\Q(x)\).

For rows \#9 and \#11, put \(u=4x/(1-x)^2\) and \(u=108x/(1-16x)^3\) respectively. The Schwarz identities give
\[
A_9(x)=\frac12\left((1-u)^{-1}+(1-u)^{-1/2}\right),
\qquad
A_{11}(x)=\frac{2}{1+\sqrt{1-u}},
\]
so in either case \(A(x)\in \Q(x,\sqrt{1-u})\). If
\(V=\Q(x)\oplus \Q(x)\sqrt{1-u}\),
then \(V\) is stable under \(d/dx\), because
\[
\frac{d}{dx}\sqrt{1-u}=-\frac{u'}{2(1-u)}\sqrt{1-u}\in V.
\]
Therefore \(A(x),A'(x),A''(x)\in V\), a two-dimensional \(\Q(x)\)-vector space. Hence these three elements are linearly dependent over \(\Q(x)\), so rows \#9 and \#11 admit annihilating operators of order at most~\(2\).

For rows \#2,\#3,\#4,\#10,\#12,\#14, let
\(g(z)=\Ftwoone(a,b;c;z)^2\).
By Theorem~\ref{thm:theta-bridge}, \(g\) is annihilated by a linear differential operator of order \(3\) with coefficients in \(\Q(z)\). Replacing \(z\) by the rational function \(\phi(\lambda x)\) and applying the chain rule produces, after clearing denominators, a linear differential operator of order at most \(3\) with coefficients in \(\Q(x)\) annihilating
\(A(x)=g(\phi(\lambda x))\).

For row \#15, the identity already recorded above is
\[
A_{15}(x)=\FthreeTwo\!\left(\tfrac12,\tfrac12,\tfrac12;1,1;64x(1-4x)(1-8x)^2\right).
\]
The standard hypergeometric differential equation for
\(H(t)=\FthreeTwo(\tfrac12,\tfrac12,\tfrac12;1,1;t)\)
has order \(3\). After the rational pullback \(t=64x(1-4x)(1-8x)^2\), the resulting series \(A_{15}(x)\) is therefore annihilated by an operator of order at most \(3\) over \(\Q(x)\).

This proves the stated order bounds for all eleven rows.
\end{proof}

\begin{remark}[no low-degree order-$2$ recurrence for the raw scan sequences]\label{rem:belyi-not-apery}
Let
\[
A(x)=\sum_{n\ge 0} a_n x^n
\]
be the generating function attached to one row of Table~\ref{tab:belyi-scan-11}. None of the $11$ raw coefficient sequences \((a_n)_{n\ge 0}\) satisfies a polynomial recurrence of order \(2\)
\[
P_0(n)a_n+P_1(n)a_{n+1}+P_2(n)a_{n+2}=0
\]
with \(\deg P_j\le 12\). Indeed, writing
\[
P_j(n)=\sum_{d=0}^{12} p_{j,d}n^d
\]
gives a homogeneous linear system in the \(39\) unknown coefficients \(p_{j,d}\). Using the equations for \(n=0,1,\dots,38\) yields a \(39\times39\) matrix. For the \(11\) sequences of Table~\ref{tab:belyi-scan-11}, this matrix has nonzero determinant modulo \(1000003\); in scan order \(\#2,\#3,\#4,\#5,\#9,\#10,\#11,\#12,\#13,\#14,\#15\), the residues are
\[
881437,\ 261488,\ 271237,\ 594271,\ 784564,\ 945202,\ 14170,\ 814591,\ 311594,\ 572039,\ 505664.
\]
Therefore no such low-degree order-$2$ polynomial recurrence exists for any of the raw row sequences. This rules out direct coincidence with the standard Zagier/Almkvist--Zudilin/Cooper kernels, and more generally with transformations that preserve both recurrence order and the degree bound \(\le 12\). We do not claim here to exclude all possible gauge or rescaling transforms.
\end{remark}

\section{Discussion and open problems}\label{sec:discussion}
The results above separate three levels of structure.

Section~\ref{sec:belyi-scan} adds a second concrete output channel to the story. Beyond the three printed order-$3$ kernels, the same Belyi-pullback machinery yields $11$ additional integer sequences from the $5040$-tuple scan. Theorem~\ref{thm:belyi-integrality} proves integrality for all eleven rows, Proposition~\ref{prop:belyi-sym2} shows that they are again \(\Sym^2\)-pullbacks of rank-$2$ Gauss objects, and Remark~\ref{rem:belyi-not-apery} shows that none of them hides an order-$2$ Ap\'ery-like recurrence of low degree.

At the \emph{formula level}, Raz et al.\ correctly canonicalize partial sums or convergents and find minimal polynomial recurrences \cite{Raz}. In that sense, the printed examples in Appendix~B.6 are indeed order-$3$ formulas. At the \emph{kernel level}, however, Theorem~\ref{thm:three-printed-factor} shows that every order-$3$ recurrence explicitly printed in Appendix~B.6 is obtained from an order-$2$ kernel by one summation. This is a structural refinement of the public order-$3$ examples, not a correction of the formula-level minimal orders in \cite{Raz}.

Theorem~\ref{thm:A036917kernel} identifies the first printed \(\pi\)-kernel with $A036917$; Theorem~\ref{thm:domb} identifies the second with the Domb numbers $A002895$; and Theorem~\ref{thm:catalan-square} traces the Catalan kernel to the Gauss-square point $(\tfrac12,1,\tfrac32)$. Theorem~\ref{thm:domb-pullback} then closes the Domb branch by exhibiting the Domb generating function as a Belyi-pulled-back twisted Gauss square at $(\tfrac16,\tfrac13,1)$, and Theorem~\ref{thm:pullback-cmf} establishes that pullback--twist preserves CMF structure. Consequently, all three printed order-$3$ kernels are covered by a single mechanism (Table~\ref{tab:kernels-v6}):
\[
\text{rank-}2\ {}_2F_1\text{-object} \xrightarrow{\;\Sym^2\;} \text{rank-}3\text{ module} \xrightarrow{\;\text{pullback/twist}\;} \text{kernel} \xrightarrow{\;\text{summation}\;} \text{order-}3\text{ formula.}
\]

Theorem~\ref{thm:inverse} provides an inverse classification. For a fixed $\Sym^2$-type Riemann scheme, the local exponents do not determine the operator uniquely---one accessory parameter $\lambda$ remains free. The $\Sym^2(\mathrm{Gauss})$ locus is cut out by the single equation $\lambda=2\gamma_1\gamma_2(1-2\alpha)$, which recovers $(a,b,c)$ explicitly. This is not an algorithmic detection (cf.\ Singer \cite{Singer} and van Hoeij \cite{vanHoeij} for operator-level symmetric-square tests) but a closed-form criterion on the accessory parameter. Its verification on the $A036917$ and Catalan cases (Remark~\ref{rem:inverse-check}) confirms the consistency of the direct and inverse approaches.

We emphasize that the individual links in this chain are not new: the third-order ODE for $f^2$ is due to Chaundy \cite{Chaundy} and Vid\=unas \cite{Vidunas}; the connection between Ap\'ery-like sequences and symmetric squares of hypergeometric equations has been understood since Almkvist, van~Straten, and Zudilin \cite{AlmkvistVSZ}, and Gorodetsky \cite{Gorodetsky} states explicitly that the generating functions of the Almkvist--Zudilin sequences are essentially the squares of the corresponding Zagier sequences; the connection between $A036917$ and $\pi$-series is established by Chan--Verrill \cite{ChanVerrill} and Cooper \cite{Cooper}; and the coefficient recurrence for general parameters is now in Mao--Tian \cite{MaoTian}. The Belyi map $\phi(x)=108x^2/(1-4x)^3$ and the Domb representation through a pulled-back Gauss square are known in the modular-forms literature \cite{ChanChanLiu,ChanZudilin}; the underlying pullback of differential equations is a standard construction \cite{Vidunas}. Our contribution is threefold: the unified CMF reinterpretation of these classical links, the pullback--twist functoriality theorem for CMFs (Theorem~\ref{thm:pullback-cmf}, an elementary restatement of standard gauge transport in the CMF setting), and the inverse classification via the accessory parameter (Theorem~\ref{thm:inverse}).

Several concrete problems remain open.
\begin{enumerate}[label=\textup{(\alph*)}]
\item Raz et al.\ report four order-$3$ canonical forms for \(\pi\), but only two explicit order-$3$ \(\pi\)-recurrences are printed in the public Appendix~B.6. Determining whether the fourth public-\emph{unprinted} order-$3$ \(\pi\)-canonical form is also a summation lift is the most immediate missing computation.

\item Theorems~\ref{thm:theta-bridge}, \ref{thm:domb-pullback}, and~\ref{thm:pullback-cmf} show that the printed kernels are intrinsic to the \emph{differential} components of the ambient Gauss CMF and its pullback. It remains open whether any of them also appear as genuine \emph{trajectories} of the current rank-$2$ \(\pi\)-CMF or of its square.

\item Theorem~\ref{thm:pullback-cmf} handles rational pullbacks and scalar twists. Extending this to algebraic pullbacks with algebraic twists in full generality---and classifying which Belyi maps are compatible with Ap\'ery-like integrality---would connect the present framework to the modular parametrizations underlying the sporadic sequences.

\item More generally, one may ask for a classification of parameters \((a,b,c)\) and Belyi maps $\phi$ for which the coefficient sequence of $u(x)\cdot{}_2F_1(a,b;c;\phi(x))^2$ is an Ap\'ery-like sequence, and for which such kernels can be realized by CMF trajectories.

\item The $11$ sequences of Section~\ref{sec:belyi-scan} raise a geometric and arithmetic problem of their own: identify the corresponding motives or period interpretations, determine the dessins attached to the Belyi maps \(4x/(1-x)^2\) and \(27x/(1-4x)^3\), and understand the associated \(L\)-functions.

\item It is also natural to ask whether the same $11$ sequences satisfy Dwork-type congruences or stronger supercongruences at primes \(p\), and whether these congruences can be read off directly from the pullback data \((a,b,c,\phi,\lambda)\).

\item Mao and Tian \cite{MaoTian} derive the coefficient recurrence for \({}_2F_1(a,b;c;z)^k\) for \(k=2\) and \(k=3\). It would be interesting to extend this to general \(k\) and to determine whether the corresponding \(\Sym^k\)(CMF) construction yields further Ap\'ery-like kernels for \(k\ge 3\).
\end{enumerate}

\end{document}